\documentclass[12pt]{article}
\usepackage[left=2.5cm,right=2.5cm,top=2.5cm,bottom=2.5cm,a4paper]{geometry}

\usepackage[a4paper]{geometry}
\usepackage{graphicx}
\usepackage{microtype}
\usepackage{siunitx}
\usepackage{booktabs}
\usepackage{graphics}
\usepackage{graphicx}
\usepackage{epsfig}
\usepackage{amsmath,amsfonts,amssymb,amsthm}
\usepackage{cleveref}
\usepackage{listings}
\usepackage{paralist}
\usepackage{sectsty}
\usepackage{datetime}
\pagestyle{headings}

\usepackage[X2,T1]{fontenc} 

\usepackage{authblk}

\title{
	A study on some approximations on the average number of the LLL bases in higher dimensions
}
\author[1]{Jaewon Jung}
\author[2]{Kyunghwan Song\thanks{khsong@jejunu.ac.kr}}
\affil[1]{Department of Mathematics, Korea University, Seoul, Republic of Korea}
\affil[2]{Department of Mathematics, Jeju National University, 102 Jejudaehakro Jeju, 63243, Republic of Korea}

\date{}

\DeclareTextSymbolDefault{\CYRABHDZE}{X2}
\DeclareTextSymbolDefault{\cyrabhdze}{X2}

\numberwithin{equation}{section}
\subsectionfont{\normalfont}

\newtheorem{theorem}{Theorem}[section]
\newtheorem{definition}[theorem]{Definition}
\newtheorem{lemma}[theorem]{Lemma}
\newtheorem{corollary}[theorem]{Corollary}

\newtheorem{problem}[theorem]{problem}
\newtheorem{notation}[theorem]{Notation}

\newcommand{\be}{\begin{equation}}
\newcommand{\ee}{\end{equation}}
\newcommand{\bee}{\begin{equation*}}
\newcommand{\eee}{\end{equation*}}

\usepackage{color}

\begin{document}
	
	\maketitle

\begin{abstract}
	There is a result related to the average number of the $(\delta, \eta)$-LLL bases in dimension $n$ in theoretical sense but the formula seems to be complicated and computing in high dimension takes a long time. In practical sense, we suggest some approximations which can be computed by just storing some constants and computing relatively simple exponential functions.
\end{abstract}

Keywords: LLL-reduction Algorithm, Shortest Vector, Riemann-zeta function, Gamma function, Special function, Riemann-Xi function, Asymptotic behavior


\section{Introduction}
Lattice-based cryptography is a cryptographic system that are based on the hardness of lattice based problems, which is firstly introduced by Ajtai \cite{Ajtai1996} and it is is a promising post-quantum cryptography family. Surely, it has the role of classical cryptography scheme, for example, key exchange and digital signature \cite{Nejatollahi2017}. Furthermore it has various promising applications such as IoT \cite{Khalid2019} and Medical data anlytics\cite{Kocabas2016}. We need some mathematical backgrounds related to linear algebra to know the process of Lattice-based cryptography. Firstly, we introduce the definition of the span of a subset of a vector space.
\begin{definition}
	Let $S = \{\mathbf{v}_1, \mathbf{v}_2, \ldots, \mathbf{v}_n\}$ be a subset of a vector space $V$. Then the span of $S$ is the set of all linear combinations of the vectors in $S$ and denoted by span$(S) = \left\{\sum_{i=1}^n a_i \mathbf{v}_i : a_1, \ldots, a_n \in \mathbb{R}\right\}$.
\end{definition}
Lattice is a set of points, which is called lattice points in $n$-dimensional space. In general, we can say the definition of lattice using linearly independent vectors.
\begin{definition}
	Let $S = \{\mathbf{v}_1, \ldots, \mathbf{v}_n\} \in \mathbb{R}^m$ be a set of linearly independent vectors. The lattice $L$ generated by $S$ is the set of linear combinations of $\mathbf{v}_1, \ldots, \mathbf{v}_n$ with coefficients in $\mathbb{Z}$. That is,
	$$
	L = \left\{\sum_{i=1}^n a_i \mathbf{v}_i : a_1, \ldots, a_n \in \mathbb{Z} \right\}.
	$$
	In this case, we say that $\{\mathbf{v}_1, \ldots, \mathbf{v}_n\}$ is a basis for $L$ and the dimension of $L$ is $n$.
\end{definition}
Related to the definition of lattice, there is one of the well-known problem which is called the Shortest Vector Problem (SVP). If someone finds a sufficiently short vector in a lattice, the vector is a strong candidate of the private key in a lattice-based cryptography \cite{Hoffstein2008}.
\begin{problem}
	The Shortest Vector Problem(SVP): Find a shortest nonzero vector in a lattice $L$, i.e., find a nonzero vector $\mathbf{v} \in L$ that minimizes the Euclidean norm $||\mathbf{v}||$.
\end{problem}
This paper is organized as follows: In Section 2, the definition and properties of LLL-reduced bases is presented. And then we introduce a theoretical result of the average number of the LLL bases in fixed dimension $n$. In Section 3, we analyze the bounds of the average number of the LLL-bases in sufficiently high dimension $n$. And in the last section, we give two approximations of the average number of the LLL-bases in sufficiently high dimension $n$.
\section{LLL-reduced basis}

\begin{notation}
For convenience, we use the following notation
$$
\text{span}_{\mathbb{Z}}(S) = \left\{\sum_{i=1}^n a_i \mathbf{v}_i : a_1, \ldots, a_n \in \mathbb{Z} \right\}
$$ for $S = \{\mathbf{v}_1, \mathbf{v}_2, \ldots, \mathbf{v}_n\}$.
\end{notation}

Note that the shortest vector is not unique in general. Let the dimension of $L$ be $n$. Then the number of the shortest vectors is at least $2^n$. If $a_1 \mathbf{v}_1 + a_2\mathbf{v}_2 + \cdots + a_n \mathbf{v}_n$ is a shortest vector in $L$, then all of the vectors of the form $\pm a_1 \mathbf{v}_1 + \pm a_2 \mathbf{v}_2 + \cdots + \pm a_n \mathbf{v}_n$ are the shortest vectors in $L$. Because of this, for each dimension $n$, Table 3.1 in \cite{Kim2015} focus on the value divided by $2^n$ of the average number of LLL bases. To construct the definition of LLL basis, we have to see the definition of the component, which is related to the span of preceding independent vectors.
\begin{definition}(\cite{Kim2015})
	Let $\{\mathbf{v}_1, \ldots, \mathbf{v}_n\}$ be a basis for $\mathbb{R}^n$. $\mathbf{v}^{*}_i$ is the component of $\mathbf{v}_i$ that is orthogonal to span$(\mathbf{v}_1,\ldots, \mathbf{v}_n)$, and $\mu_{i,j} = <\mathbf{v}_i, \frac{\mathbf{v}^{*}_j}{||\mathbf{v}^{*}_j||^2}>$, $\mathbf{v}^{*}_1 = \mathbf{v}_1$ and we obtain $\mathbf{v}^{*}_i$ for $i = 2,\ldots, n$ using Gram-Schmidt Orthogonalization as follows:
	$$
	\mathbf{v}^{*}_i = \mathbf{v}_i - \sum_{j=1}^{i-1} \mu_{i,j} \mathbf{v}^{*}_j.
	$$
\end{definition} 
Now, we are ready to see the definition of the $(\delta, \eta)$-LLL basis.
\begin{definition} (\cite{Kim2015})
	Let $\{\mathbf{v}_1, \ldots, \mathbf{v}_n\}$ be a basis for $\mathbb{R}^n$, and take two parameters $\frac{1}{2} < \delta < 1$, $\frac{1}{2} < \eta < \delta$. In practice, one often takes $\delta$ and $\eta$ arbitrarily close to, but not equal to, $1$ and $\frac{1}{2}$ respectively. Then a basis $\{\mathbf{v}_1, \ldots, \mathbf{v}_n\}$ is a $(\delta, \eta)$-LLL basis if
	\begin{enumerate}
		\item $|\mu_{i,j}| \leq \eta$ for all $j < i$.
		\item $\delta ||\mathbf{v}^{*}_i|| \leq ||\mathbf{v}^{*}_{i+1} + \mu_{i+1,i} \mathbf{v}^{*}_i||$ for all $i = 1,\ldots, n-1$ (the Lov$\acute{a}$sz condition).
	\end{enumerate}
\end{definition}
In \cite{Kim2015}, the average number of the $(\delta, \eta)$-LLL bases is evaluated and this is directly related to the probability to find a shortest vector in a fixed lattice whose dimension is $n$.
\begin{theorem}(\cite{Kim2015})
	The average number of the $(\delta, \eta)$-LLL bases in dimension $n$ is
	\begin{equation}\label{eq_Kim}
	2\cdot (2\eta)^{\frac{(n-1)(n-2)}{2}} \prod_{i=2}^{n} \frac{S_i (1)}{\zeta(i)}\cdot \frac{1}{n} \prod_{i=1}^{n-1} \frac{1}{i(n-i)}\cdot \prod_{i=1}^{n-1} \int_{-\eta}^{\eta} \sqrt{\delta^2 - x^2}^{-i(n-i)} dx
	\end{equation}
	where $\zeta(i)$ is the Riemann-zeta function and $S_i(x)$ is the surface area of a sphere in $\mathbb{R}^i$ of radius $x$.
\end{theorem}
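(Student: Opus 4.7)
The strategy is to express the average count via a Siegel-type mean value formula, unfold it to an integral over ordered basis data, pass to Gram--Schmidt coordinates so that the LLL conditions decouple, and finally evaluate the resulting product of one-variable integrals.

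First, I would write $N(\delta,\eta;n) = \int \#\{\text{$(\delta,\eta)$-LLL bases of }L\}\, d\mu(L)$ over the space of unit-covolume lattices with its invariant probability measure. I would unfold the count layer by layer: choose $\mathbf{v}_1$ primitive in $L$, then $\mathbf{v}_2$ whose image in the rank-$(n-1)$ projection modulo $\mathbb{R}\mathbf{v}_1$ is primitive, and so on. Each primitivity condition at co-layer $k = n-i+1$ introduces a M\"obius-inversion factor $1/\zeta(k)$, so after unfolding, the integral becomes one over $n$-tuples of vectors in $\mathbb{R}^n$ (with the unit-covolume constraint) and accumulates the product $\prod_{k=2}^{n} 1/\zeta(k)$.

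Second, I would pass to Gram--Schmidt coordinates $(r_i,\omega_i,\{\mu_{i,j}\}_{j<i})$, where $r_i = \|\mathbf{v}_i^{*}\|$ and $\omega_i$ is the direction of $\mathbf{v}_i^{*}$ in the $(n-i+1)$-dimensional orthogonal complement of $\mathrm{span}(\mathbf{v}_1,\ldots,\mathbf{v}_{i-1})$. The triangular change of variables $\mathbf{v}_i \mapsto (\mathbf{v}_i^{*}, \mu_{i,1}, \ldots, \mu_{i,i-1})$ is unimodular, so $d\mathbf{v}_i = r_i^{n-i}\, dr_i \, d\omega_i \prod_{j<i} d\mu_{i,j}$. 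Integrating out the $\omega_i$ over the appropriate unit spheres contributes $\prod_{i=1}^{n} S_{n-i+1}(1) = 2\prod_{k=2}^{n} S_k(1)$, explaining both the leading factor $2$ (from $S_1(1)$, the sign of the last one-dimensional direction) and the product $\prod_{k=2}^n S_k(1)$ in the target formula.

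Third, I would translate the LLL conditions into constraints on $(r_i,\mu_{i,j})$. The size-reduction bound $|\mu_{i,j}|\leq\eta$ splits into two groups: for the $\sum_{i=2}^n (i-2) = (n-1)(n-2)/2$ ``free'' coefficients with $j<i-1$, direct integration gives $(2\eta)^{(n-1)(n-2)/2}$; for each $\mu_{i+1,i}$, the Lov\'asz condition reads $(r_{i+1}/r_i)^2 \geq \delta^2 - \mu_{i+1,i}^2$, coupling it to $t_i := r_{i+1}/r_i$. Combining this with the radial weight $r_i^{n-i}$ from step~2 and the covolume constraint $\prod r_i = 1$, the change of variables to the $t_i$ turns each radial integral into
\[
\int_{\sqrt{\delta^2-\mu_{i+1,i}^2}}^{\infty} t_i^{-i(n-i)-1}\, dt_i \;=\; \frac{1}{i(n-i)}\bigl(\delta^2-\mu_{i+1,i}^2\bigr)^{-i(n-i)/2},
\]
which upon integrating $\mu_{i+1,i}$ over $[-\eta,\eta]$ produces the Lov\'asz integral $\int_{-\eta}^{\eta}(\delta^2-x^2)^{-i(n-i)/2}\,dx$ with prefactor $1/(i(n-i))$; a leftover normalization yields the final $1/n$. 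The main obstacle, in my view, is step~3: correctly tracking the radial measure weights and deriving the integrand $t_i^{-i(n-i)-1}$ with the right Grassmannian-type exponent after the $\omega_i$ integrations and the covolume reduction. Once this bookkeeping is in place, all the pieces --- the initial $2\prod_{k=2}^n S_k(1)/\zeta(k)$, the combinatorial prefactor $\frac{1}{n}\prod \frac{1}{i(n-i)}$, the $(2\eta)^{(n-1)(n-2)/2}$, and the Lov\'asz integrals --- combine into the stated formula.
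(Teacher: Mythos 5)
The paper does not prove this theorem at all: it is imported verbatim from \cite{Kim2015} and used as a black box, so there is no internal proof to compare your argument against. Judged on its own terms, your outline is the standard (and almost certainly the intended) derivation --- unfold the expected count over the space of unit-covolume lattices layer by layer to produce $\prod_{k=2}^{n}\zeta(k)^{-1}$, pass to Gram--Schmidt coordinates so that the sphere directions integrate to $S_1(1)\prod_{k=2}^{n}S_k(1)=2\prod_{k=2}^{n}S_k(1)$, integrate the $(n-1)(n-2)/2$ uncoupled coefficients $\mu_{i,j}$ ($j<i-1$) to get $(2\eta)^{(n-1)(n-2)/2}$, and convert the Lov\'asz constraint $r_{i+1}^2/r_i^2\geq\delta^2-\mu_{i+1,i}^2$ into the integrals $\frac{1}{i(n-i)}\int_{-\eta}^{\eta}(\delta^2-x^2)^{-i(n-i)/2}\,dx$. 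All the bookkeeping you do carry out (the count of free $\mu_{i,j}$, the evaluation of $\int_{c}^{\infty}t^{-i(n-i)-1}\,dt$, the origin of the leading $2$) is correct.

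The genuine gap is exactly the one you name yourself: step 3 is asserted, not proved. The entire content of the theorem sits in showing that, after integrating out the directions and imposing the unit-covolume constraint, the joint density of the ratios $t_i=r_{i+1}/r_i$ is $\frac{1}{n}\prod_{i=1}^{n-1}t_i^{-i(n-i)-1}$ --- i.e.\ deriving the exponents $i(n-i)$ and the normalization $1/n$ from the radial weights $r_i^{n-i}$ and the hyperplane $\sum\log r_i=0$. Without that computation (a linear change of variables on the $\log r_i$ and its Jacobian), the prefactor $\frac{1}{n}\prod_{i=1}^{n-1}\frac{1}{i(n-i)}$ and the exponents in the Lov\'asz integrals are reverse-engineered from the answer rather than established. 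There is also a measure-theoretic point you gloss over: the ``invariant probability measure on unit-covolume lattices'' and the validity of the layer-by-layer unfolding (a Siegel-type mean value identity applied $n-1$ times to successive primitive quotients) each require justification. So the plan is right, but as written it is a road map to the proof in \cite{Kim2015}, not a proof.
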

We give some approximations of (\ref{eq_Kim}) using relatively simple formula in the next section.

\section{The bound of the probability to find a shortest vector using LLL-reduction algorithm}

In fact, Equation (\ref{eq_Kim}) is equal to
	\begin{equation}\label{eq_total}
	2^{\frac{n^2-3n+4}{2}} \eta^{\frac{(n-1)(n-2)}{2}} \prod_{i=2}^{n} \frac{1}{\xi(i)}\cdot \prod_{i=1}^{n-1} \int_{-\eta}^{\eta} \sqrt{\delta^2 - x^2}^{-i(n-i)} dx
\end{equation}
where $\xi(i) = \frac{1}{2}i(i-1)\pi^{-i/2}\Gamma\left(\frac{i}{2}\right)\zeta(i)$, which is called the Riemann-Xi function. Therefore, finding upper and lower bound of the product of the Riemann-Xi functions and that of the product of the integration part of the equation (\ref{eq_total}) are critical parts of finding the approximation of the average number of the LLL bases in sufficiently large dimension $n$. We treat these two parts in the following sections and then summarize them to find upper and lower bound of (\ref{eq_total}).

\subsection{The bound of the product of the Riemann-Xi functions} \label{sec:Riemann-Xi}
Because the Riemann-Xi function is defined by $\xi(s) = \frac{1}{2}s(s-1)\pi^{-s/2}\Gamma\left(\frac{s}{2}\right)\zeta(s)$, firstly we have to find the bounds of $\Gamma(\frac{s}{2})$ and $\zeta(s)$. Immediately, we have
$$
\sqrt{\pi}\left(\frac{s-2}{2e}\right)^{(s-2)/2}(s-2)^{1/2} < \Gamma(\frac{s}{2}) < \sqrt{\pi}\left(\frac{s-2}{2e}\right)^{(s-2)/2}(s-1)^{1/2}
$$
using Lemma 1.9 in \cite{Batir2008}. Also, we can check that
$$
1 < \zeta(s) < 1 + \frac{1}{s-1}
$$
using the integral test. Therefore we get
\begin{align}
	& \frac{1}{2}s(s-1)\pi^{-s/2}  \sqrt{\pi}\left(\frac{s-2}{2e}\right)^{(s-2)/2}(s-2)^{1/2} \label{xi_left} \\
	& < \xi(s) \nonumber \\
	& < \frac{1}{2}s(s-1)\pi^{-s/2} \sqrt{\pi}\left(\frac{s-2}{2e}\right)^{(s-2)/2}(s-1)^{1/2} \left(1 + \frac{1}{s-1}\right). \label{xi_right}
\end{align}
The left side of this inequality (\ref{xi_left}) is greater than
\begin{align}
& \frac{1}{2}(s-2)^2\pi^{-s/2}  \sqrt{\pi}\left(\frac{s-2}{2e}\right)^{(s-2)/2}(s-2)^{1/2}  \nonumber\\
& = \frac{1}{2\sqrt{\pi}}(s-2)^{(s+3)/2}\cdot \left(\frac{1}{2\pi e}\right)^{(s-2)/2} \label{xi_left_v2}
\end{align}
and the right side of this inequality (\ref{xi_right}) is less than
\begin{align}
& \frac{1}{2}(s-1)^2\pi^{-(s-1)/2} \left( \frac{s-1}{2e} \right)^{(s-2)/2} (s-1)^{1/2} \nonumber \\
& = \frac{1}{2\sqrt{\pi}}(s-1)^{(s+3)/2}\cdot \left(\frac{1}{2\pi e}\right)^{(s-2)/2} \label{xi_right_v2}
\end{align}
for $s \geq 6$. From (\ref{xi_left_v2}) and (\ref{xi_right_v2}), we have
\begin{align*}
	& \prod_{s=6}^{n} \left(2\sqrt{\pi}(s-1)^{-(s+3)/2}\cdot \left(2\pi e\right)^{(s-2)/2}\right)  = \left(2\sqrt{\pi}\right)^{(n-5)}(2\pi e)^{(n^2 - 3n - 10)/4} \cdot \prod_{s=6}^{n} (s-1)^{-(s+3)/2} \\
	& < \prod_{s=6}^{n} \frac{1}{\xi(s)} \\ 
	& < \prod_{s=6}^{n} \left(2\sqrt{\pi} (s-2)^{-(s+3)/2}\cdot (2\pi e)^{(s-2)/2}\right) = \left(2\sqrt{\pi}\right)^{(n-5)}(2\pi e)^{(n^2 - 3n - 10)/4} \cdot \prod_{s=6}^{n} (s-2)^{-(s+3)/2}
\end{align*}
and we can simplify the product part as follows:
\begin{align*}
	& \prod_{s=6}^{n} (s-1)^{-(s+3)/2} \\
	& = \exp\left( \sum_{s=6}^n \left(-\frac{s+3}{2}\right)\ln (s-1) \right) \\
	& = \exp\left(-\frac{1}{2}\sum_{s=6}^n (s-1)\ln(s-1) - 2\sum_{s=6}^n \ln(s-1) \right) \\
	& > \exp\left( -\frac{1}{2} \int_{6}^{n+1} (s-1)\ln(s-1) ds -2 \int_{6}^{n+1} \ln(s-1) ds\right) \\ 
	& = \exp\left(-\frac{1}{2}\left(\frac{1}{2}n^2\ln n - \frac{1}{4}n^2 - \frac{25}{2} \ln 5 + \frac{25}{4} \right) - 2\left( n\ln n - n - 5\ln 5 + 5 \right)  \right) \\
	& = \exp\left(-\frac{n(n+8)}{4} \ln n + \frac{n(n+16)}{8} + \frac{25}{4} \ln 5 -\frac{25}{8} + 10 \ln 5 - 10 \right) \\
	& > \exp\left(-\frac{n(n+8)}{4} \ln n + \frac{n(n+16)}{8} + 13.0284  \right),
\end{align*}
\begin{align*}
	& \prod_{s=6}^{n} (s-2)^{-(s+3)/2} \\
	& = \exp\left( \sum_{s=6}^n \left(-\frac{s+3}{2}\right)\ln (s-2) \right) \\
	& = \exp\left(-\frac{1}{2}\sum_{s=6}^n (s-2)\ln(s-2) -\frac{5}{2} \sum_{s=6}^n \ln(s-2) \right) \\
	& < \exp\left( -\frac{1}{2} \int_{5}^{n} (s-2)\ln(s-2) ds -\frac{5}{2} \int_{5}^{n} \ln(s-2) ds\right) \\ 
	& = \exp\left(-\frac{1}{2}\left(\frac{(n-2)^2}{2}\ln n - \frac{(n-2)^2}{4} - \frac{9}{2} \ln 3 + \frac{9}{4} \right) - \frac{5}{2} \left( (n-2) \ln (n-2) - (n-2) - 3\ln 3 + 3 \right)  \right) \\
	& = \exp\left(-\frac{(n+8)(n-2)}{4} \ln (n-2) + \frac{(n+18)(n-2)}{8} + \frac{9}{4} \ln 3 -\frac{9}{8} + \frac{15}{2} \ln 3 -\frac{15}{2} \right) \\
	& < \exp\left(-\frac{(n+8)(n-2)}{4} \ln (n-2) + \frac{(n+18)(n-2)}{8} + 2.08647  \right),
\end{align*}
Therefore we have
\begin{align*}
	& \prod_{s=2}^{5} \frac{1}{\xi(s)}\cdot \left(2\sqrt{\pi}\right)^{(n-5)}(2\pi e)^{(n^2 - 3n - 10)/4} \cdot  \exp\left(-\frac{n(n+8)}{4} \ln n + \frac{n(n+16)}{8} + 13.0284  \right) \\
	& < \prod_{s=2}^{n} \frac{1}{\xi(s)} \\ 
	& < \prod_{s=2}^{5} \frac{1}{\xi(s)} \cdot \left(2\sqrt{\pi}\right)^{(n-5)}(2\pi e)^{(n^2 - 3n - 10)/4} \\
	& \quad \cdot  \exp\left(-\frac{(n+8)(n-2)}{4} \ln (n-2) + \frac{(n+18)(n-2)}{8} + 2.08647  \right).
\end{align*}
Also, we can verify
\begin{align*}
	& (2\sqrt{\pi})^{n-5} (2\pi e)^{(n^2 - 3n - 10)/4} \cdot 2^{(n^2-3n+4)/2}\cdot \eta^{(n-1)(n-2)/2} \\
	& = \exp\Bigg( n^2\left(\frac{3}{4}\ln 2 + \frac{1}{4}\ln \pi + \frac{1}{2} \ln \eta + \frac{1}{4} \right) \\
	& \quad + n\left(-\frac{5}{4}\ln 2 -\frac{1}{4}\ln \pi -\frac{3}{2}\ln \eta -\frac{3}{4} \right) \\
	& \quad + \left(-\frac{11}{2}\ln 2 -5\ln \pi +\ln \eta - \frac{5}{2} \right)\Bigg).
\end{align*}
Hence we have a simplified form of lower bound
\begin{align*}
	& \prod_{s=2}^{5} \frac{1}{\xi(s)} \cdot \exp\Bigg(-\frac{1}{4}n^2\ln n + n^2 \left(\frac{3}{4}\ln 2 + \frac{1}{4}\ln \pi + \frac{1}{2} \ln \eta + \frac{3}{8}\right) \\
	& \quad -2n\ln n + n\left(-\frac{5}{4}\ln 2 - \frac{1}{4}\ln \pi  -\frac{3}{2} \ln \eta + \frac{5}{4} \right) \\
	& \quad + 0.9924 + \ln \eta \Bigg).
\end{align*}
of $2^{\frac{n^2-3n+4}{2}} \eta^{\frac{(n-1)(n-2)}{2}} \prod_{s=2}^{n} \frac{1}{\xi(s)}$.

Using $\ln n - \ln (n-2) > \frac{2}{n}$, we have
\begin{align*}
	& -\frac{1}{4}n^2 \ln(n-2) + \frac{1}{8}n^2 - \frac{3}{2}n\ln (n-2) + 2n + 4\ln(n-2) - \frac{9}{2} \\
	& < -\frac{1}{4}n^2 \ln n + \frac{1}{8} n^2 - \frac{3}{2}n\ln n + \frac{5}{2} n + 4\ln n - \frac{3}{2}.
\end{align*}
Hence we have an simplified form of upper bound
\begin{align*}
	& \prod_{s=2}^{5} \frac{1}{\xi(s)} \cdot \exp\Bigg(-\frac{1}{4}n^2\ln n + n^2\left(\frac{3}{4}\ln 2 + \frac{1}{4}\ln \pi + \frac{1}{2}\ln \eta + \frac{3}{8}\right) \\
	& \quad -\frac{3}{2}n\ln n + n\left(-\frac{5}{4}\ln 2 - \frac{1}{4}\ln \pi -\frac{3}{2}\ln \eta + \frac{7}{4}\right) \\
	& \quad + 4\ln n - 11.4495 + \ln \eta \Bigg)
\end{align*}
of $2^{\frac{n^2-3n+4}{2}} \eta^{\frac{(n-1)(n-2)}{2}} \prod_{s=2}^{n} \frac{1}{\xi(s)}$.

Since $\ln(\prod_{s=2}^{5} \frac{1}{\xi(s)}) = 1.85914510535951$, we have 
\begin{lemma}
	\begin{align*}
		& \exp\Bigg(-\frac{1}{4}n^2\ln n + n^2 \left(\frac{3}{4}\ln 2 + \frac{1}{4}\ln \pi + \frac{1}{2} \ln \eta + \frac{3}{8}\right) \\
		& \quad -2n\ln n + n\left(-\frac{5}{4}\ln 2 - \frac{1}{4}\ln \pi  -\frac{3}{2} \ln \eta + \frac{5}{4} \right) \\
		& \quad + 2.8515 + \ln \eta \Bigg) \\
		& < 2^{\frac{n^2-3n+4}{2}} \eta^{\frac{(n-1)(n-2)}{2}} \prod_{s=2}^{n} \frac{1}{\xi(s)} \\
		& < \exp\Bigg(-\frac{1}{4}n^2\ln n + n^2\left(\frac{3}{4}\ln 2 + \frac{1}{4}\ln \pi + \frac{1}{2}\ln \eta + \frac{3}{8}\right) \\
		& \quad -\frac{3}{2}n\ln n + n\left(-\frac{5}{4}\ln 2 - \frac{1}{4}\ln \pi -\frac{3}{2}\ln \eta + \frac{7}{4}\right) \\
		& \quad + 4\ln n - 9.5903 + \ln \eta \Bigg).
	\end{align*}
\end{lemma}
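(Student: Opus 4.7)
The plan is to bound the Riemann-Xi function $\xi(s)$ from above and below for each integer $s \geq 6$, multiply these bounds together, handle the finitely many initial terms $s = 2,\ldots,5$ as an exact numerical constant, and finally merge everything with the explicit prefactor $2^{(n^2-3n+4)/2}\eta^{(n-1)(n-2)/2}$.

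First I would start from the definition $\xi(s) = \tfrac{1}{2}s(s-1)\pi^{-s/2}\Gamma(s/2)\zeta(s)$. For $\Gamma(s/2)$ I would invoke Lemma 1.9 of Batir \cite{Batir2008}, which sandwiches $\Gamma(s/2)$ between $\sqrt{\pi}\bigl(\tfrac{s-2}{2e}\bigr)^{(s-2)/2}(s-2)^{1/2}$ and the analogous expression with $(s-1)^{1/2}$. For $\zeta(s)$ the integral test on $\sum_{k\geq 1}k^{-s}$ yields $1 < \zeta(s) < 1 + \tfrac{1}{s-1}$. Combining these produces two-sided bounds on $\xi(s)$; to absorb the awkward $s(s-1)$ factor and the $\zeta$-correction $(1+\tfrac{1}{s-1})$ into clean monomials, I would bound $s(s-1)$ below by $(s-2)^2$ and above by $(s-1)^2(1+\tfrac{1}{s-1})^{-1}$-style manipulations, valid precisely once $s \geq 6$. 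This yields $\xi(s) > \tfrac{1}{2\sqrt{\pi}}(s-2)^{(s+3)/2}(2\pi e)^{-(s-2)/2}$ and the analogous upper bound with $(s-1)^{(s+3)/2}$.

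Next I would take reciprocals and form $\prod_{s=6}^n \xi(s)^{-1}$. The factors $2\sqrt{\pi}$ and $(2\pi e)^{(s-2)/2}$ telescope into the closed form $(2\sqrt{\pi})^{n-5}(2\pi e)^{(n^2-3n-10)/4}$. The only nontrivial piece is the sum $\sum_{s=6}^n \tfrac{s+3}{2}\ln(s-k)$ for $k \in \{1,2\}$, which I would estimate by the standard integral comparison: for the lower bound on the product I use $\sum f(s) < \int_{6}^{n+1} f$ with $f(s) = \tfrac{s+3}{2}\ln(s-1)$, and for the upper bound I use $\sum f(s) > \int_{5}^{n} f$ with $f(s) = \tfrac{s+3}{2}\ln(s-2)$. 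Both integrals close in elementary form via integration by parts; the leading contribution is $-\tfrac{n^2}{4}\ln n$, and the boundary terms at $s=5,6$ contribute the explicit numerical constants $13.0284$ and $2.08647$.

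The last step is to multiply by the fixed constant $\prod_{s=2}^{5}\xi(s)^{-1}$ (whose logarithm is $1.85914510535951$) and by the prefactor $2^{(n^2-3n+4)/2}\eta^{(n-1)(n-2)/2}$. Expanding exponents and collecting the $n^2$, $n$, $\ln n$, and constant coefficients gives the expressions displayed just above the lemma. For the upper bound the leading logarithmic term is temporarily $-\tfrac{1}{4}n^2\ln(n-2)$, so I would apply $\ln n - \ln(n-2) > 2/n$ to convert $\ln(n-2)$ into $\ln n$ at a cost of $-\tfrac{1}{2}n$ in the linear coefficient, after which the stated form falls out. The main obstacle is purely bookkeeping: every step (replacing $s(s-1)$ by $(s-2)^2$ or $(s-1)^2$, absorbing the $\zeta$ correction, swapping sums for integrals, and replacing $\ln(n-2)$ by $\ln n$) must point in the correct direction for the inequality being proved, and the numerical constants at $s=5,6$ must be tracked carefully; once Batir's $\Gamma$ bound and the $\zeta$ integral test are in hand, no single step is analytically deep.
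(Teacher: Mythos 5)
Your proposal follows the paper's proof essentially step for step: Batir's bound on $\Gamma(s/2)$, the integral test for $\zeta(s)$, the monomial bounds $(s-2)^{(s+3)/2}$ and $(s-1)^{(s+3)/2}$ for $s\geq 6$, the integral comparison over $[6,n+1]$ and $[5,n]$ yielding the constants $13.0284$ and $2.08647$, the exact constant $\prod_{s=2}^{5}\xi(s)^{-1}$, and the final $\ln n - \ln(n-2) > 2/n$ conversion. This matches the paper's argument in both structure and detail (the shift in the linear coefficient from that last step is $+\tfrac{1}{2}n$, not $-\tfrac{1}{2}n$, but that is only a slip of wording).
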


\subsection{The bound of the product of the integration part} \label{sec:Integration}
Let $n \geq 22$. We start with the following theorem that is used in the main computation.
\begin{theorem}\label{Thm_exp}
	Let $0 < x < 1$ be a real number. Then we have
	\begin{equation}\label{eq_prod_1-x^k}
		\prod_{k=1}^{n} (1-x^k) \geq \exp\left( \frac{x(1-x^n)}{1-x}\left(\ln(1-x) - 1\right) \right).
	\end{equation}
\end{theorem}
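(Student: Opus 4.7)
The plan is to take logarithms and reduce the claim to
\[
\sum_{k=1}^n \ln(1-x^k) \;\geq\; \frac{x(1-x^n)}{1-x}\bigl(\ln(1-x) - 1\bigr),
\]
then establish it by way of an intermediate bound $\sum_{k=1}^n \ln(1-x^k) \geq \frac{(1-x^n)\ln(1-x)}{1-x}$ that is cleaner to obtain and already captures the dominant behavior.

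The first step is a termwise lower bound $\ln(1-x^k) \geq x^{k-1}\ln(1-x)$, which I would derive from concavity of $y \mapsto \ln(1-y)$ on $[0,1)$. Since $\ln(1-0) = 0$, concavity implies $\ln(1-tx) \geq t\ln(1-x)$ for all $t \in [0,1]$; applying this with $t = x^{k-1} \in (0,1]$ gives the desired bound. Summing the geometric series yields
\[
\sum_{k=1}^n \ln(1-x^k) \;\geq\; \ln(1-x) \sum_{k=1}^n x^{k-1} \;=\; \frac{(1-x^n)\ln(1-x)}{1-x}.
\]

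The second step is to show this intermediate bound already dominates the target, i.e.
\[
\frac{(1-x^n)\ln(1-x)}{1-x} \;\geq\; \frac{x(1-x^n)(\ln(1-x)-1)}{1-x}.
\]
Dividing by the positive factor $(1-x^n)/(1-x)$, this reduces to the elementary scalar inequality $(1-x)\ln(1-x) + x \geq 0$. Setting $g(x) := (1-x)\ln(1-x) + x$, I would note $g(0) = 0$ and compute $g'(x) = -\ln(1-x) \geq 0$ on $[0,1)$, so $g$ is non-decreasing and hence non-negative, which closes the argument. Exponentiating the chain of inequalities yields the stated product lower bound.

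No step presents a genuine obstacle: the main conceptual move is the choice to split the desired bound at the intermediate quantity $\frac{(1-x^n)\ln(1-x)}{1-x}$, since a direct termwise attempt to show $\ln(1-x^k) \geq x^k(\ln(1-x)-1)$ cannot be verified by a naive Taylor-coefficient comparison (e.g.\ at $k=2$ the $x^6$ coefficients go the wrong way), whereas the concavity-based bound $\ln(1-x^k) \geq x^{k-1}\ln(1-x)$ is tight at $k=1$ and loses only the gentle scalar correction $(1-x)\ln(1-x)+x \geq 0$ afterwards.
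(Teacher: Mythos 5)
Your proof is correct, but it follows a genuinely different route from the paper's. The paper expands each term via the Mercator series, $-\ln(1-x^k)=\sum_{i\geq 1}x^{ki}/i$, interchanges the order of summation to obtain $\sum_{i\geq 1}\frac{x^{i}(1-x^{in})}{i(1-x^{i})}$, factors $\frac{x(1-x^{n})}{1-x}$ out of every term, and bounds the remaining series above by $1-\ln(1-x)$ using $\frac{1+x^{n}+\cdots+x^{(i-1)n}}{1+x+\cdots+x^{i-1}}\leq 1$ and $\frac{1}{i}\leq\frac{1}{i-1}$. You instead bound each term by its secant line through the origin, $\ln(1-x^{k})\geq x^{k-1}\ln(1-x)$ (concavity of $y\mapsto\ln(1-y)$), sum the geometric series to reach the intermediate quantity $\frac{(1-x^{n})\ln(1-x)}{1-x}$, and then descend to the stated bound via the scalar inequality $(1-x)\ln(1-x)+x\geq 0$. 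Your argument is more elementary---it avoids rearranging an infinite double series---and it in fact establishes the slightly stronger inequality $\prod_{k=1}^{n}(1-x^{k})\geq(1-x)^{(1-x^{n})/(1-x)}$, of which the theorem's bound is a weakening. The paper's computation has the cosmetic advantage of producing the prefactor $\frac{x(1-x^{n})}{1-x}$ in exactly the form appearing in the statement; both proofs deliver the bound that is actually used downstream in Lemma \ref{lem_int_lower}.
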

\begin{proof}
	Using Taylor series for $\ln(1-x^2)$ at $x = 0$, we have
	\begin{align*}
		-\sum_{k=1}^{n}\ln(1-x^k) = \sum_{k=1}^{n}\left(\sum_{i=1}^{\infty} \frac{x^{ki}}{i}\right) = \sum_{i=1}^{\infty}\frac{1}{i}\left(\sum_{k=1}^{n}(x^i)^k\right).
	\end{align*}
	Note that it is equal to
	\begin{align*}
		\sum_{i=1}^{\infty} \frac{1}{i}\frac{x^i\left(1-x^{in}\right)}{1-x^i} = \frac{x(1-x^n)}{1-x}\left(1 + \sum_{i=2}^{\infty} \frac{x^{i-1}(1 + x^n + \cdots + x^{(i-1)n})}{i\left(1 + x + \cdots + x^{i-1}\right)}\right),
	\end{align*}
and it is smaller than
	\begin{equation*}
		\frac{x(1 - x^n)}{1-x} \left( 1+\sum_{i=2}^{\infty} \frac{x^{i-1}}{i-1} \right) = \frac{x(1 - x^n)}{1-x} \left( 1- \ln (1-x) \right).
	\end{equation*}
It leads to the following inequality
	\begin{equation*}		
		\sum_{k=1}^{n} \ln (1 - x^k) >  \frac{x(1 - x^n)}{1-x} \left( \ln (1-x) -1 \right) 
	\end{equation*}
and therefore we have	
	\begin{equation*}
		\prod_{k=1}^{n} (1 - x^k) = \exp \left( \sum_{k=1}^{n} \ln (1 - x^k) \right) > \exp \left( \frac{x(1 - x^n)}{1-x} \left( \ln (1-x) -1 \right) \right).
	\end{equation*}
\end{proof}
Then let us apply the change of variables to the integration part as follows:
\begin{align*}
& \prod_{i=1}^{n-1} \int_{-\eta}^{\eta} \sqrt{\delta^2 - x^2}^{-i(n-i)} dx \\
& = \prod_{i=1}^{n-1} \int_{-\sin^{-1}\left(\frac{\eta}{\delta}\right)}^{\sin^{-1}\left(\frac{\eta}{\delta}\right)} (\delta \cos \theta)^{-i(n-i)} \delta \cos \theta d\theta (x = \delta \sin \theta) \\
& = \prod_{i=1}^{n-1} \delta^{-i(n-i) + 1}\int_{-\sin^{-1}\left(\frac{\eta}{\delta}\right)}^{\sin^{-1}\left(\frac{\eta}{\delta}\right)} ( \cos \theta)^{-i(n-i) + 1} d\theta \\
& = \prod_{i=1}^{n-1} 2\delta^{-i(n-i) + 1}\int_{0}^{\sin^{-1}\left(\frac{\eta}{\delta}\right)} ( \sec \theta)^{i(n-i) - 1} d\theta.
\end{align*}
For convenience, let $m = i(n-i) - 1$. Since
\begin{align*}
	& \int  ( \sec \theta)^{m} d\theta \\
	& = \frac{\sec^{m-2}x \tan x}{m-1} + \frac{m-2}{m-1} \int  ( \sec \theta)^{m-2} d\theta \\
	& = \frac{\sec^{m-2}x \tan x}{m-1} + \frac{m-2}{m-1}\frac{\sec^{m-4}x \tan x}{m-3} + \frac{m-2}{m-1}\frac{m-4}{m-3} \int  ( \sec \theta)^{m-4} d\theta \\
	& = \frac{\sec^{m-2}x \tan x}{m-1} + \frac{m-2}{m-1}\frac{\sec^{m-4}x \tan x}{m-3} +  \frac{m-2}{m-1}\frac{m-4}{m-3} \frac{\sec^{m-6}x \tan x}{m-5} \\
	& \quad + \frac{m-2}{m-1}\frac{m-4}{m-3}\frac{m-6}{m-5} \int  ( \sec \theta)^{m-6} d\theta \\
	& = \cdots,
\end{align*}
we have a lower bound of $\int_{0}^{\sin^{-1}\left(\frac{\eta}{\delta}\right)} ( \sec \theta)^{i(n-i) - 1} d\theta$ as follows: if $m$ is even, we have
\begin{align*}
	& \int_{0}^{\sin^{-1}\left(\frac{\eta}{\delta}\right)} ( \sec \theta)^{i(n-i) - 1} d\theta \\
	& > \left[\frac{\sec^{m-2}x \tan x}{m-1} + \frac{\sec^{m-4}x \tan x}{m-1} + \frac{\sec^{m-6}x \tan x}{m-1} + \cdots + \frac{\sec^2x \tan x}{m-1} + \frac{\tan x}{m-1}\right]^{\sin^{-1}\left(\frac{\eta}{\delta}\right)}_0 \\
	& = \left[\frac{\sec^{m-2}x \tan x}{m-1}\left(\frac{1-\cos^m x}{1-\cos^2 x}\right)\right]^{\sin^{-1}\left(\frac{\eta}{\delta}\right)}_0 \\
	& = \frac{\sec^{m-2}\left(\sin^{-1}\left(\frac{\eta}{\delta}\right)\right)\tan \left(\sin^{-1}\left(\frac{\eta}{\delta}\right)\right) }{m-1} \frac{1-\cos^m \left(\sin^{-1}\left(\frac{\eta}{\delta}\right)\right)}{1-\cos^2 \left(\sin^{-1}\left(\frac{\eta}{\delta}\right)\right)} \\
	& = \frac{\sec^{m-2}\left(\sin^{-1}\left(\frac{\eta}{\delta}\right)\right)\tan \left(\sin^{-1}\left(\frac{\eta}{\delta}\right)\right) }{m-1} \frac{1-\cos^m \left(\sin^{-1}\left(\frac{\eta}{\delta}\right)\right)}{\frac{\eta^2}{\delta^2}} \\
	& = \frac{\cos^{-m+2}\left(\sin^{-1}\left(\frac{\eta}{\delta}\right)\right)\tan \left(\sin^{-1}\left(\frac{\eta}{\delta}\right)\right) }{m-1} \frac{1-\cos^m \left(\sin^{-1}\left(\frac{\eta}{\delta}\right)\right)}{\frac{\eta^2}{\delta^2}} \\
	& = \frac{\delta^2}{\eta^2(m-1)}\left(\frac{\frac{\eta}{\delta}}{\cos^{m-1}\left(\sin^{-1}\left(\frac{\eta}{\delta}\right)\right)} - \frac{\eta}{\delta}\cos\left(\sin^{-1}\left(\frac{\eta}{\delta}\right) \right) \right) \\
	& = \frac{\delta}{\eta (m-1)}\frac{1}{\cos^{m-1}\left(\sin^{-1}\left(\frac{\eta}{\delta}\right)\right) }\left(1 - \cos^{m}\left(\sin^{-1}\left(\frac{\eta}{\delta}\right)\right) \right) \\
	& = \frac{\delta}{\eta \left(i(n-i) - 2\right)}\frac{1}{\cos^{i(n-i)-2}\left(\sin^{-1}\left(\frac{\eta}{\delta}\right)\right)}\cdot \left(1-\cos^{i(n-i)-1}\left(\sin^{-1}\left(\frac{\eta}{\delta}\right)\right)\right).
\end{align*}
Therefore
\begin{align*}
	& \prod_{i=1}^{n-1} \int_{0}^{\sin^{-1}\left(\frac{\eta}{\delta}\right)} ( \sec \theta)^{i(n-i) - 1} d\theta \\
	& > \prod_{i=1}^{n-1}  \frac{\delta}{\eta \left(i(n-i) - 2\right)}\frac{1}{\cos^{i(n-i)-2}\left(\sin^{-1}\left(\frac{\eta}{\delta}\right)\right)}\cdot \left(1-\cos^{i(n-i)-1}\left(\sin^{-1}\left(\frac{\eta}{\delta}\right)\right)\right) \\
	& = \frac{\delta^{n-1}}{\eta^{n-1}}\cdot \cos^{-\frac{(n-1)(n-3)(n+4)}{6}}\left(\sin^{-1}\left(\frac{\eta}{\delta}\right)\right)\cdot \prod_{i=1}^{n-1} \frac{1-\cos^{i(n-i)-1}\left(\sin^{-1}\left(\frac{\eta}{\delta}\right)\right)}{i(n-i)-2} \\
	& > \frac{\delta^{n-1}}{\eta^{n-1}}\cdot \cos^{-\frac{(n-1)(n-3)(n+4)}{6}}\left(\sin^{-1}\left(\frac{\eta}{\delta}\right)\right)\cdot \prod_{i=1}^{n-1} \frac{1-\cos^{i(n-i)-1}\left(\sin^{-1}\left(\frac{\eta}{\delta}\right)\right)}{i(n-i)-1} \\
	& > \frac{\delta^{n-1}}{\eta^{n-1}}\cdot \cos^{-\frac{(n-1)(n-3)(n+4)}{6}}\left(\sin^{-1}\left(\frac{\eta}{\delta}\right)\right)\cdot \prod_{i=1}^{n-1} \frac{1-\cos^{in}\left(\sin^{-1}\left(\frac{\eta}{\delta}\right)\right)}{in} \\
	& > \frac{\delta^{n-1}}{\eta^{n-1}}\cdot \frac{1}{(n-1)!~n^{n-1}}\cdot \cos^{-\frac{(n-1)(n-3)(n+4)}{6}}\left(\sin^{-1}\left(\frac{\eta}{\delta}\right)\right) \\
	& \quad \cdot \exp\left(\frac{\cos^{n}\left(\sin^{-1}\left(\frac{\eta}{\delta}\right)\right)\left(1-\cos^{(n-1)n}\left(\sin^{-1}\left(\frac{\eta}{\delta}\right)\right)\right)}{1-\cos^{n}\left(\sin^{-1}\left(\frac{\eta}{\delta}\right)\right)}\left(\ln\left(1-\cos^{n}\left(\sin^{-1}\left(\frac{\eta}{\delta}\right)\right) \right) - 1 \right)\right)\\
	& \text{by Theorem \ref{Thm_exp}.}
\end{align*}

To find a lower bound when $m$ is odd, we have to set the following lemma.
\begin{lemma}\label{lem_even}
	Let $0 < x < \frac{\sqrt{3}}{2}$ and $l \geq 19$. Then we have
	\begin{equation}\label{eq_lem_1}
		\frac{1-x^{l-2}}{l} > \frac{1-x^{l+2}}{l+2}.
	\end{equation}
\end{lemma}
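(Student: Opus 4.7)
The plan is to convert the claimed inequality into a single clean upper bound. Multiplying both sides of $\frac{1-x^{l-2}}{l} > \frac{1-x^{l+2}}{l+2}$ by $l(l+2)>0$, the statement is equivalent to
$$(l+2)(1-x^{l-2}) > l(1-x^{l+2}),$$
which, after cancelling $l$ on each side and rearranging, becomes
$$x^{l-2}\bigl((l+2) - l x^{4}\bigr) < 2.$$
So the whole lemma reduces to verifying this one inequality for every $l \ge 19$ and every $x \in (0,\tfrac{\sqrt{3}}{2})$.

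Next, I would simply discard the favourable term $-l x^{4} \le 0$ and use monotonicity of $x \mapsto x^{l-2}$ on $(0,1)$. Since $0 < x < \tfrac{\sqrt{3}}{2} < 1$,
$$x^{l-2}\bigl((l+2) - l x^{4}\bigr) \le (l+2)\, x^{l-2} < (l+2)\left(\tfrac{\sqrt{3}}{2}\right)^{l-2}.$$
Hence it suffices to prove the $x$-free statement
$$f(l) := (l+2)\left(\tfrac{\sqrt{3}}{2}\right)^{l-2} < 2 \qquad\text{for all integers } l \ge 19.$$

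Finally, I would handle $f$ by a base case plus monotonicity. Computing
$$\frac{f(l+1)}{f(l)} = \frac{l+3}{l+2}\cdot\frac{\sqrt{3}}{2},$$
this ratio is less than $1$ exactly when $\sqrt{3}(l+3) < 2(l+2)$, i.e.\ $l > 2\sqrt{3}+1 \approx 4.46$; in particular $f$ is strictly decreasing for $l \ge 19$. Therefore it is enough to check $f(19) = 21\cdot\left(\tfrac{\sqrt{3}}{2}\right)^{17} < 2$, which is an elementary numerical estimate (e.g.\ $\left(\tfrac{3}{4}\right)^{8}\cdot\tfrac{\sqrt{3}}{2} \approx 0.0867$, so $f(19)\approx 1.82$).

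The proof is routine: the only real work is the algebraic rearrangement and the base case $l=19$. There is no serious obstacle, though one must note that the constant $\tfrac{\sqrt{3}}{2}$ is tuned precisely so that $f(19)$ lands just under $2$; a noticeably larger upper bound on $x$ or a smaller starting value of $l$ would break the argument, which is presumably why the hypotheses take exactly this form.
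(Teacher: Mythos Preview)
Your proof is correct and follows essentially the same approach as the paper: both rewrite the inequality as $(l+2)x^{l-2} - l x^{l+2} < 2$, bound the left side above by $f(l) = (l+2)(\sqrt{3}/2)^{l-2}$, show $f$ is eventually decreasing, and check the base case $f(19) < 2$. The only cosmetic difference is that the paper establishes monotonicity via $f'(l) < 0$ for $l \ge 6$ while you use the ratio $f(l+1)/f(l) < 1$ for $l > 2\sqrt{3}+1$.
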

\begin{proof}
	Note that Eq. (\ref{eq_lem_1}) is equivalent to
	$$
	(l+2)x^{l-2} - lx^{l+2} < 2
	$$
	whose LHS is lower than $f(l) = (l+2)\left(\frac{\sqrt{3}}{2}\right)^{l-2}$. Since $f'(l) < 0$ for any $l \geq 6$ and $f(19) < 2$, we conclude that the inequality (\ref{eq_lem_1}) is satisfied when $l \geq 19$.
\end{proof}
Then we are ready to find a lower bound of the integration part when $m$ is odd. We start with the following inequalities and equations.
\begin{align*}
	& \int_{0}^{\sin^{-1}\left(\frac{\eta}{\delta}\right)} ( \sec \theta)^{i(n-i) - 1} d\theta \\
	& > \bigg[\frac{\sec^{m-2}x \tan x}{m-1} + \frac{\sec^{m-4}x \tan x}{m-1} + \frac{\sec^{m-6}x \tan x}{m-1} + \cdots + \frac{\sec^3x \tan x}{m-1} \\
	& \quad + \frac{1}{m-1}\ln|\sec x + \tan x|\bigg]^{\sin^{-1}\left(\frac{\eta}{\delta}\right)}_0 \\
	& = \left[\frac{\sec^{m-2}x \tan x}{m-1}\left(\frac{1-\cos^{m-3} x}{1-\cos^2 x}\right) + \frac{1}{m-1}\ln|\sec x + \tan x|\right]^{\sin^{-1}\left(\frac{\eta}{\delta}\right)}_0 \\
	& = \frac{\delta}{\eta (m-1)}\frac{1}{\cos^{m-1}\left(\sin^{-1}\left(\frac{\eta}{\delta}\right)\right) }\left(1 - \cos^{m-3}\left(\sin^{-1}\left(\frac{\eta}{\delta}\right)\right) \right) \\
	& \quad + \frac{1}{m-1}\ln\left|\sec \left(\sin^{-1}\left(\frac{\eta}{\delta}\right)\right) + \tan \left(\sin^{-1}\left(\frac{\eta}{\delta}\right)\right)\right| \\
	& > \frac{\delta}{\eta (m-1)}\frac{1}{\cos^{m-1}\left(\sin^{-1}\left(\frac{\eta}{\delta}\right)\right) }\left(1 - \cos^{m-3}\left(\sin^{-1}\left(\frac{\eta}{\delta}\right)\right) \right).
\end{align*}

Similar to the case when $m$ is even, we have a lower bound
\begin{align*}
	& \prod_{i=1}^{n-1} \int_{0}^{\sin^{-1}\left(\frac{\eta}{\delta}\right)} ( \sec \theta)^{i(n-i) - 1} d\theta \\
	& > \prod_{i=1}^{n-1}  \frac{\delta}{\eta \left(i(n-i) - 2\right)}\frac{1}{\cos^{i(n-i)-2}\left(\sin^{-1}\left(\frac{\eta}{\delta}\right)\right)}\cdot \left(1-\cos^{i(n-i)-4}\left(\sin^{-1}\left(\frac{\eta}{\delta}\right)\right)\right) \\
	& = \frac{\delta^{n-1}}{\eta^{n-1}}\cdot \cos^{-\frac{(n-1)(n-3)(n+4)}{6}}\left(\sin^{-1}\left(\frac{\eta}{\delta}\right)\right)\cdot \prod_{i=1}^{n-1} \frac{1-\cos^{i(n-i)-4}\left(\sin^{-1}\left(\frac{\eta}{\delta}\right)\right)}{i(n-i)-2} \\
	& > \frac{\delta^{n-1}}{\eta^{n-1}}\cdot \cos^{-\frac{(n-1)(n-3)(n+4)}{6}}\left(\sin^{-1}\left(\frac{\eta}{\delta}\right)\right)\cdot \prod_{i=1}^{n-1} \frac{1-\cos^{i(n-i)}\left(\sin^{-1}\left(\frac{\eta}{\delta}\right)\right)}{i(n-i)} \text{ by Lemma \ref{lem_even}.} \\
	& > \frac{\delta^{n-1}}{\eta^{n-1}}\cdot \cos^{-\frac{(n-1)(n-3)(n+4)}{6}}\left(\sin^{-1}\left(\frac{\eta}{\delta}\right)\right)\cdot \prod_{i=1}^{n-1} \frac{1-\cos^{in}\left(\sin^{-1}\left(\frac{\eta}{\delta}\right)\right)}{in} \\
	& > \frac{\delta^{n-1}}{\eta^{n-1}}\cdot \frac{1}{(n-1)!~n^{n-1}}\cdot \cos^{-\frac{(n-1)(n-3)(n+4)}{6}}\left(\sin^{-1}\left(\frac{\eta}{\delta}\right)\right) \\
	& \quad \cdot \exp\left(\frac{\cos^{n}\left(\sin^{-1}\left(\frac{\eta}{\delta}\right)\right)\left(1-\cos^{(n-1)n}\left(\sin^{-1}\left(\frac{\eta}{\delta}\right)\right)\right)}{1-\cos^{n}\left(\sin^{-1}\left(\frac{\eta}{\delta}\right)\right)}\left(\ln\left(1-\cos^{n}\left(\sin^{-1}\left(\frac{\eta}{\delta}\right)\right) \right) - 1 \right)\right) \\
	& \text{by Theorem \ref{Thm_exp}.}
\end{align*}
Note that a formula for lower bound is equal for any $m$. Therefore we have the following
\begin{lemma}\label{lem_int_lower}
	Let $n \geq 22$. Then we have
	\begin{align*}
	& \prod_{i=1}^{n-1} \int_{0}^{\sin^{-1}\left(\frac{\eta}{\delta}\right)} ( \sec \theta)^{i(n-i) - 1} d\theta \\
	& > \frac{\delta^{n-1}}{\eta^{n-1}}\cdot \frac{1}{(n-1)!~n^{n-1}}\cdot \cos^{-\frac{(n-1)(n-3)(n+4)}{6}}\left(\sin^{-1}\left(\frac{\eta}{\delta}\right)\right) \\
	& \quad \cdot \exp\left(\frac{\cos^{n}\left(\sin^{-1}\left(\frac{\eta}{\delta}\right)\right)\left(1-\cos^{(n-1)n}\left(\sin^{-1}\left(\frac{\eta}{\delta}\right)\right)\right)}{1-\cos^{n}\left(\sin^{-1}\left(\frac{\eta}{\delta}\right)\right)}\left(\ln\left(1-\cos^{n}\left(\sin^{-1}\left(\frac{\eta}{\delta}\right)\right) \right) - 1 \right)\right).
	\end{align*}
\end{lemma}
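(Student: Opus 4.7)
The plan is to prove the lemma by (i) bounding each single integral $I_i := \int_0^{\sin^{-1}(\eta/\delta)} (\sec\theta)^{i(n-i)-1}\,d\theta$ from below via iterated integration by parts, (ii) multiplying the bounds over $i$, and (iii) applying Theorem \ref{Thm_exp} to obtain an exponential closed form.

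First, set $m = i(n-i) - 1$ and repeatedly apply the reduction identity $\int \sec^m\theta\,d\theta = \frac{\sec^{m-2}\theta \tan\theta}{m-1} + \frac{m-2}{m-1}\int \sec^{m-2}\theta\,d\theta$. Since each coefficient $(m-2k)/(m-2k+1)$ is less than $1$, I would replace them all by $1/(m-1)$ to obtain a geometric-style sum $\sum_k \sec^{m-2k}(\psi)\tan(\psi)/(m-1)$ with $\psi = \sin^{-1}(\eta/\delta)$. Here a case split on the parity of $m$ is forced: for even $m$ the recursion terminates with the nonnegative term $\theta$, which I drop; for odd $m$ it terminates with $\ln|\sec\theta + \tan\theta|$, which is also nonnegative and can be dropped. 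Using $1 - \cos^2\psi = \eta^2/\delta^2$, the geometric sum collapses to
\[
I_i \,>\, \frac{\delta}{\eta(i(n-i)-2)}\cdot\frac{1 - \cos^{\,m'}\psi}{\cos^{i(n-i)-2}\psi},
\]
where $m' = i(n-i) - 1$ in the even case and $m' = i(n-i) - 4$ in the odd case (one extra $\sec^2\theta\tan\theta$ summand must be sacrificed in the odd case to match the geometric pattern).

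Next, multiplying over $i = 1, \ldots, n-1$, the prefactors $\delta/\eta$ combine to $\delta^{n-1}/\eta^{n-1}$, and via $\sum_{i=1}^{n-1} i(n-i) = n(n^2-1)/6$ the cosine exponents telescope to $(n-1)(n-3)(n+4)/6$, matching the exponent in the lemma. In the odd case I would invoke Lemma \ref{lem_even} with $l = i(n-i) - 2$ to replace the factor $(1 - \cos^{i(n-i)-4}\psi)/(i(n-i)-2)$ by the smaller $(1 - \cos^{i(n-i)}\psi)/(i(n-i))$; the hypothesis $l \geq 19$ is exactly $i(n-i) \geq 21$, and the extreme cases $i = 1$ or $i = n-1$ give $n - 1 \geq 21$, which is precisely why the lemma requires $n \geq 22$. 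In the even case the analogous reduction is cheaper, combining the trivial $1 - \cos^{m-1}\psi \geq 1 - \cos^{m+1}\psi$ with $1/(m-2) > 1/m$ to reach the same common form $(1 - \cos^{in}\psi)/(in)$. Bounding each $i(n-i) \leq in$ in the denominators gives the factor $\prod_{i=1}^{n-1} in = (n-1)!\,n^{n-1}$.

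Finally, the remaining product $\prod_{i=1}^{n-1}(1 - \cos^{in}\psi)$ is exactly the object controlled by Theorem \ref{Thm_exp} applied with $x = \cos^n\psi$ (which lies in $(0,1)$ since $0 < \eta < \delta < 1$), and this yields the exponential expression in the statement of the lemma. The main obstacle is the odd-$m$ case: because the reduction terminates at a logarithm rather than a geometric term, the bookkeeping is more delicate and the bound must be realigned with the even-case expression through Lemma \ref{lem_even}, which is what pins down the threshold $n \geq 22$. Once both parities are reduced to a common form, the rest of the argument is routine telescoping followed by a single application of Theorem \ref{Thm_exp}.
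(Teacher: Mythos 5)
Your proposal follows essentially the same route as the paper: the same secant reduction formula with every iterated coefficient bounded below by $1/(m-1)$, the same parity split (dropping the logarithmic tail and one extra summand when $m$ is odd), Lemma \ref{lem_even} applied with $l=i(n-i)-2$ to realign the odd case (which is exactly where the hypothesis $n\ge 22$ enters, via $i(n-i)\ge n-1\ge 21$), the telescoped cosine exponent $\frac{(n-1)(n-3)(n+4)}{6}$, the bound $\prod_{i=1}^{n-1} i(n-i)\le (n-1)!\,n^{n-1}$, and a final application of Theorem \ref{Thm_exp} with $x=\cos^{n}\left(\sin^{-1}\left(\frac{\eta}{\delta}\right)\right)$. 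One minor slip: in the even case the inequality you invoke, $1-\cos^{m-1}\psi\ge 1-\cos^{m+1}\psi$ with $\psi=\sin^{-1}(\eta/\delta)$, is reversed as written; the correct and sufficient observation (used implicitly by the paper too) is that $k\mapsto\frac{1-x^{k}}{k}$ is decreasing in $k$ for $0<x<1$, which yields $\frac{1-\cos^{i(n-i)-1}\psi}{i(n-i)-1}\ge\frac{1-\cos^{in}\psi}{in}$ directly.
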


Because $\int  ( \sec \theta)^{m} d\theta < \frac{\sec^{m-2}x \tan x}{m-1} + \frac{m-2}{m-1} \int  ( \sec \theta)^{m} d\theta$, an upper bound is as follows:
\begin{lemma}\label{lem_int_upper}
	\begin{equation*}
		 \prod_{i=1}^{n-1} \int_{0}^{\sin^{-1}\left(\frac{\eta}{\delta}\right)} ( \sec \theta)^{i(n-i) - 1} d\theta < \frac{\eta^{n-1}}{\delta^{n-1}}\cos^{-\frac{(n-1)(n-3)(n+4)}{6}}\left(\sin^{-1}\left(\left(\frac{\eta}{\delta}\right) \right)\right).
	\end{equation*}
\end{lemma}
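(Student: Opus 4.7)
The plan is to invoke the single recurrence identity stated immediately before the lemma, namely
$$\int_0^a (\sec\theta)^m\, d\theta = \frac{\sec^{m-2}a\,\tan a}{m-1} + \frac{m-2}{m-1}\int_0^a (\sec\theta)^{m-2}\, d\theta,$$
(the boundary term at $\theta=0$ vanishes because $\tan 0 = 0$). Since $\sec\theta > 1$ on $(0,a]$ whenever $a > 0$, I have the strict comparison $\int_0^a (\sec\theta)^{m-2}\,d\theta < \int_0^a (\sec\theta)^m\, d\theta$. Substituting this into the recurrence, moving the self-term to the left and multiplying by $(m-1)$ collapses the bound to the single-term estimate $\int_0^a (\sec\theta)^m\, d\theta < \sec^{m-2}a\,\tan a$, which is exactly the rearrangement the author's one-line hint is pointing toward.

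Next I would specialize to $a = \sin^{-1}(\eta/\delta)$ and $m = i(n-i) - 1$, and use $\tan(\sin^{-1}(\eta/\delta)) = (\eta/\delta)/\cos(\sin^{-1}(\eta/\delta))$ to turn the right-hand side into
$$\int_0^{\sin^{-1}(\eta/\delta)} (\sec\theta)^{i(n-i)-1}\, d\theta < \frac{\eta}{\delta}\,\cos^{-(i(n-i)-2)}\!\left(\sin^{-1}(\eta/\delta)\right).$$
Taking the product over $i = 1, \dots, n-1$, the prefactor $\eta/\delta$ contributes $\eta^{n-1}/\delta^{n-1}$, and the exponents of $\cos^{-1}(\sin^{-1}(\eta/\delta))$ stack into $\sum_{i=1}^{n-1}(i(n-i) - 2)$.

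All that remains is the arithmetic identity $\sum_{i=1}^{n-1}(i(n-i)-2) = \frac{(n-1)(n-3)(n+4)}{6}$. Using the standard formulas for $\sum i$ and $\sum i^2$, one computes $\sum_{i=1}^{n-1} i(n-i) = \frac{n(n-1)(n+1)}{6}$; subtracting $2(n-1)$ gives $\frac{(n-1)(n^2+n-12)}{6}$, and the factorization $n^2 + n - 12 = (n+4)(n-3)$ delivers the exponent stated in the lemma.

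The only point requiring attention is the validity range of the recurrence: I need $m - 1 > 0$ and $m - 2 \geq 0$ so that the comparison $(\sec\theta)^{m-2} \leq (\sec\theta)^m$ genuinely improves the recursion. Since the section carries the standing hypothesis $n \geq 22$, the minimum value of $i(n-i)$ on $i \in \{1,\dots,n-1\}$ is $n-1 \geq 21$, giving $m \geq 20$, so no degenerate index interferes. This will be the only obstacle worth mentioning; everything else is bookkeeping.
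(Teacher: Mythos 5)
Your proposal is correct and follows essentially the same route as the paper: the reduction formula for $\int(\sec\theta)^m\,d\theta$ combined with $\int_0^a(\sec\theta)^{m-2}\,d\theta<\int_0^a(\sec\theta)^m\,d\theta$ to get the one-term bound $\sec^{m-2}a\,\tan a$, followed by the product and the exponent arithmetic $\sum_{i=1}^{n-1}\bigl(i(n-i)-2\bigr)=\frac{(n-1)(n-3)(n+4)}{6}$, which matches the paper's computation exactly. Your added remark on the validity range $m\geq 20$ under the standing hypothesis $n\geq 22$ is a sensible detail the paper leaves implicit.
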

\begin{proof}
\begin{align*}
	& \prod_{i=1}^{n-1} \int_{0}^{\sin^{-1}\left(\frac{\eta}{\delta}\right)} ( \sec \theta)^{i(n-i) - 1} d\theta \\
	& < \prod_{i=1}^{n-1} \sec^{m-2}\left(\sin^{-1}\left(\frac{\eta}{\delta}\right)\right) \tan \left(\sin^{-1}\left(\frac{\eta}{\delta}\right)\right) \\
	& = \tan^{n-1} \left(\sin^{-1}\left(\frac{\eta}{\delta}\right)\right) \cos^{-\frac{(n-1)(n^2 + n - 18)}{6}}\left(\sin^{-1}\left(\left(\frac{\eta}{\delta}\right) \right)\right) \\
	& = \frac{\eta^{n-1}}{\delta^{n-1}}\cos^{-\frac{(n-1)(n-3)(n+4)}{6}}\left(\sin^{-1}\left(\left(\frac{\eta}{\delta}\right) \right)\right) 
\end{align*}
\end{proof}
Combining Lemma \ref{lem_int_lower} and \ref{lem_int_upper}, we have the following
\begin{lemma}\label{lem_int_upp_low}
	Let $n \geq 22$ and $t = \cos\left(\sin^{-1}\left(\frac{\eta}{\delta}\right)\right)$. Then we have
		\begin{align*}
			& \frac{\delta^{n-1}}{\eta^{n-1}}\cdot \frac{1}{(n-1)!~n^{n-1}}\cdot t^{-\frac{(n-1)(n-3)(n+4)}{6}} \cdot \exp\left(\frac{t^{n}\left(1-t^{(n-1)n}\right)}{1-t^{n}}\left(\ln\left(1-t^{n}\right) - 1 \right)\right) \\
			& < \prod_{i=1}^{n-1} \int_{0}^{\sin^{-1}\left(\frac{\eta}{\delta}\right)} ( \sec \theta)^{i(n-i) - 1} d\theta \\
			& < \frac{\eta^{n-1}}{\delta^{n-1}}\cdot t^{-\frac{(n-1)(n-3)(n+4)}{6}}.
		\end{align*}
\end{lemma}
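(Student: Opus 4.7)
The plan is to observe that Lemma \ref{lem_int_upp_low} is essentially a notational repackaging of Lemma \ref{lem_int_lower} and Lemma \ref{lem_int_upper}: both of those lemmas already bound the same product of integrals $\prod_{i=1}^{n-1} \int_{0}^{\sin^{-1}(\eta/\delta)} (\sec\theta)^{i(n-i)-1}\,d\theta$, one from below and one from above, and the statement to prove merely abbreviates the recurring quantity $\cos(\sin^{-1}(\eta/\delta))$ by the single letter $t$.

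First I would note that the hypothesis $n \geq 22$ is inherited from Lemma \ref{lem_int_lower}, which in turn needed $n\geq 22$ to guarantee the applicability of Lemma \ref{lem_even} (the threshold $l \geq 19$ there translates to a lower bound on $n$ so that $i(n-i)-2 \geq 19$ for every $i=1,\dots,n-1$ used in the odd-$m$ estimate). For the upper bound, Lemma \ref{lem_int_upper} has no dimension restriction, so the assumption $n \geq 22$ is not binding on that side.

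Next, I would introduce the substitution $t := \cos\bigl(\sin^{-1}(\eta/\delta)\bigr)$ and simply rewrite the right-hand sides of Lemmas \ref{lem_int_lower} and \ref{lem_int_upper} in terms of $t$. The lower bound becomes
\[
\frac{\delta^{n-1}}{\eta^{n-1}}\cdot \frac{1}{(n-1)!\,n^{n-1}}\cdot t^{-\frac{(n-1)(n-3)(n+4)}{6}}\cdot \exp\!\left(\frac{t^{n}(1-t^{(n-1)n})}{1-t^{n}}\bigl(\ln(1-t^{n})-1\bigr)\right),
\]
and the upper bound becomes $\tfrac{\eta^{n-1}}{\delta^{n-1}}\, t^{-(n-1)(n-3)(n+4)/6}$.

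Since Lemma \ref{lem_int_lower} asserts the former is a lower bound and Lemma \ref{lem_int_upper} asserts the latter is an upper bound for the same product, concatenating the two inequalities yields the stated sandwich. There is no genuine obstacle here: the only thing to check carefully is the bookkeeping of the exponent $-(n-1)(n-3)(n+4)/6$ and the factor $\delta^{n-1}/\eta^{n-1}$ versus $\eta^{n-1}/\delta^{n-1}$, both of which already appear identically in the two preceding lemmas. Hence the proof consists of one sentence of substitution followed by citation of Lemma \ref{lem_int_lower} and Lemma \ref{lem_int_upper}.
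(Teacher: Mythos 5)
Your proposal is correct and matches the paper exactly: the paper introduces this lemma with the single sentence ``Combining Lemma \ref{lem_int_lower} and \ref{lem_int_upper}, we have the following'' and gives no further proof, so the content really is just the substitution $t = \cos\left(\sin^{-1}\left(\frac{\eta}{\delta}\right)\right)$ and concatenation of the two earlier bounds. Your side remark that $n \geq 22$ comes from needing $i(n-i)-2 \geq 19$ (worst case $i=1$, giving $n-3\geq 19$) is a correct and slightly more explicit accounting than the paper provides.
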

Then we have a more simplified lower and upper bound of $\displaystyle \prod_{i=1}^{n-1} \int_{0}^{\sin^{-1}\left(\frac{\eta}{\delta}\right)} ( \sec \theta)^{i(n-i) - 1} d\theta$ as follows:
\begin{lemma}
	Let $n \geq 22$, $t = \cos\left(\sin^{-1}\left(\frac{\eta}{\delta}\right)\right)$ and $a = t^n$. Then we have
	\begin{align*}
		& \frac{\delta^{n-1}}{\eta^{n-1}}\cdot \exp \left( -2(n-1)\ln n + \frac{a(1 - a^{n-1})}{1-a}(\ln (1-a) - 1) + \left(-\frac{n^2}{6} + 3\right)\ln a\right) \\
		& < \prod_{i=1}^{n-1} \int_{0}^{\sin^{-1}\left(\frac{\eta}{\delta}\right)} ( \sec \theta)^{i(n-i) - 1} d\theta \\
		& < \frac{\eta^{n-1}}{\delta^{n-1}} a^{-\frac{n^2}{6}}.
	\end{align*}
\end{lemma}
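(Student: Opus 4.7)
The plan is to start from the bounds already provided by Lemma \ref{lem_int_upp_low} and convert the power of $t$ into a power of $a = t^n$. Expanding the polynomial gives $(n-1)(n-3)(n+4) = n^3 - 13n + 12$, so
\[
\frac{(n-1)(n-3)(n+4)}{6n} = \frac{n^2}{6} - \frac{13}{6} + \frac{2}{n},
\]
and since $t = a^{1/n}$, the factor $t^{-(n-1)(n-3)(n+4)/6}$ becomes $a^{-n^2/6 + 13/6 - 2/n}$. All subsequent manipulations are just monotonicity arguments for $a \mapsto a^x$, keeping in mind that $0 < a < 1$ so $a^x$ is strictly decreasing in $x$.

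For the upper bound, I would note that for $n \geq 22$ the correction $\frac{13}{6} - \frac{2}{n}$ is strictly positive, so $a^{13/6 - 2/n} < 1$ and hence $a^{-n^2/6 + 13/6 - 2/n} < a^{-n^2/6}$. Inserting this into the upper bound of Lemma \ref{lem_int_upp_low} gives the stated $\frac{\eta^{n-1}}{\delta^{n-1}} a^{-n^2/6}$ directly.

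For the lower bound, I would first tame the factorial factor: the elementary estimate $(n-1)! \leq (n-1)^{n-1} \leq n^{n-1}$ yields
\[
\frac{1}{(n-1)!\, n^{n-1}} \geq n^{-2(n-1)} = \exp\bigl(-2(n-1)\ln n\bigr).
\]
Next, since $\frac{13}{6} - \frac{2}{n} < 3$ for all $n \geq 22$, the exponent $-\frac{n^2}{6} + \frac{13}{6} - \frac{2}{n}$ is strictly less than $-\frac{n^2}{6} + 3$, so by the decreasing monotonicity of $a^x$ one has $a^{-n^2/6 + 13/6 - 2/n} > a^{-n^2/6 + 3}$. Leaving the exponential factor from Lemma \ref{lem_int_upp_low} untouched and collecting everything inside a single $\exp$ produces precisely the claimed lower bound.

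The main ``danger'' here is purely bookkeeping: one must correctly flip direction when converting inequalities on exponents to inequalities on $a^x$ under $0 < a < 1$, and one must verify that the cutoff $n \geq 22$ is genuinely strong enough for both $\frac{13}{6} - \frac{2}{n} > 0$ and $\frac{13}{6} - \frac{2}{n} < 3$ to hold. There is no deeper obstacle; the statement is essentially a cosmetic simplification of Lemma \ref{lem_int_upp_low} designed to make the bound easier to combine with the bound on the Riemann--Xi product in the next section.
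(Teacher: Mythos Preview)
Your proposal is correct and follows essentially the same route as the paper: starting from Lemma~\ref{lem_int_upp_low}, you rewrite $t^{-(n-1)(n-3)(n+4)/6}$ as a power of $a=t^n$, bound that exponent above by $-n^2/6+3$ and below by $-n^2/6$ using $0<a<1$, and handle the factorial via $(n-1)!<n^{n-1}$; the paper does exactly this but leaves the polynomial expansion and the monotonicity checks implicit.
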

\begin{proof}
	\begin{align*}
		& \frac{\delta^{n-1}}{\eta^{n-1}}\cdot \frac{1}{(n-1)!~n^{n-1}}\cdot t^{-\frac{(n-1)(n-3)(n+4)}{6}} \cdot \exp\left(\frac{t^{n}\left(1-t^{(n-1)n}\right)}{1-t^{n}}\left(\ln\left(1-t^{n}\right) - 1 \right)\right) \\
		& > \frac{\delta^{n-1}}{\eta^{n-1}}\frac{1}{(n-1)!n^{n-1}}a^{-\frac{n^2}{6} + 3} \exp\left(\frac{a(1 - a^{n-1})}{1-a}(\ln (1-a) - 1)\right) \\
		& > \frac{\delta^{n-1}}{\eta^{n-1}}\cdot \exp \left( -2(n-1)\ln n + \frac{a(1 - a^{n-1})}{1-a}(\ln (1-a) - 1) + \left(-\frac{n^2}{6} + 3\right)\ln a\right).
	\end{align*}
Also we have
$$
		\frac{\eta^{n-1}}{\delta^{n-1}} t^{-\frac{(n-1)(n-3)(n+4)}{6}} < \frac{\eta^{n-1}}{\delta^{n-1}} a^{-\frac{n^2}{6}}.
$$
\end{proof}
\section{Conclusion}
Finally, we obtain a simplified bound of the average number of the $(\delta, \eta)$-LLL bases in dimension $n$. We summarize and give two approximations in this section.

In the previous sections \ref{sec:Riemann-Xi} and \ref{sec:Integration}, we have the following simplified bound of the average number of the $(\delta,\eta)$-LLL bases in dimension $n$.
\begin{theorem}
	Let $n \geq 22$, $t = \cos\left(\sin^{-1}\left(\frac{\eta}{\delta}\right)\right)$, and $a = t^n$. Then we have
	\begin{align*}
		& \frac{\delta^{n-1}}{\eta^{n-1}}\cdot \exp \Bigg(-\frac{1}{4}n^2\ln n + n^2\bigg(\frac{3}{4}\ln 2 + \frac{1}{4}\ln \pi + \frac{1}{2} \ln \eta + \frac{3}{8} - \frac{1}{6} \ln a\bigg) \\
		& \quad - 4n\ln n + n\bigg(-\frac{5}{4}\ln 2 - \frac{1}{4} \ln \pi - \frac{3}{2}\ln \eta + \frac{5}{4}\bigg) + 2.8515 + \ln \eta \\
		& \quad + \frac{a(1 - a^{n-1})}{1-a}(\ln (1-a) - 1) + 3\ln a\Bigg) \\
		& < 2^{\frac{n^2-3n+4}{2}} \eta^{\frac{(n-1)(n-2)}{2}} \prod_{i=2}^{n} \frac{1}{\xi(i)}\cdot \prod_{i=1}^{n-1} \int_{-\eta}^{\eta} \sqrt{\delta^2 - x^2}^{-i(n-i)} dx \\
		& < \frac{\eta^{n-1}}{\delta^{n-1}} \cdot \exp \Bigg( -\frac{1}{4}n^2\ln n + n^2\left(\frac{3}{4}\ln 2 + \frac{1}{4}\ln \pi + \frac{1}{2}\ln \eta + \frac{3}{8} -\frac{1}{6}\ln a \right) \\
		& \quad -\frac{3}{2}n\ln n + n\left(-\frac{5}{4}\ln 2 - \frac{1}{4}\ln \pi -\frac{3}{2}\ln \eta + \frac{7}{4}\right) \\
		& \quad + 4\ln n - 9.5903 + \ln \eta \Bigg).
	\end{align*}
\end{theorem}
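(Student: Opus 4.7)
The plan is to prove the theorem by directly combining the simplified upper and lower bounds of the two factors that partition the right-hand side of (\ref{eq_total}):
\[
\underbrace{2^{\frac{n^2-3n+4}{2}}\eta^{\frac{(n-1)(n-2)}{2}}\prod_{i=2}^n \frac{1}{\xi(i)}}_{\text{Riemann--Xi factor}}\;\cdot\;\underbrace{\prod_{i=1}^{n-1}\int_{-\eta}^{\eta}\sqrt{\delta^2-x^2}^{-i(n-i)}\,dx}_{\text{integration factor}}.
\]
Since the theorem is an identity between products of positive quantities, it suffices to bound each factor separately and multiply.

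First, I would apply the change of variables $x=\delta\sin\theta$ carried out at the beginning of Section \ref{sec:Integration} to put the integration factor into the canonical form $\prod_{i=1}^{n-1}\int_0^{\sin^{-1}(\eta/\delta)}(\sec\theta)^{i(n-i)-1}d\theta$ (up to the trigonometric prefactors), exactly the shape treated in the final simplified lemma of Section \ref{sec:Integration}. That lemma sandwiches the integration factor between $\tfrac{\delta^{n-1}}{\eta^{n-1}}\exp\bigl(-2(n-1)\ln n+\tfrac{a(1-a^{n-1})}{1-a}(\ln(1-a)-1)+(-\tfrac{n^2}{6}+3)\ln a\bigr)$ and $\tfrac{\eta^{n-1}}{\delta^{n-1}}a^{-n^2/6}$ with $a=t^n$, $t=\cos(\sin^{-1}(\eta/\delta))$.

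Next, I would invoke the lemma of Section \ref{sec:Riemann-Xi} to sandwich the Riemann--Xi factor between two explicit exponentials whose exponents are polynomials in $n$, $\ln n$, and constants involving $\ln\eta$, $\ln 2$, $\ln\pi$. Since both bounds are of the form $\exp(\cdot)$, combining them with the integration bounds amounts to \emph{adding exponents}: the $-\tfrac{1}{6}\ln a$ contribution enters the $n^2$-coefficient, and the residual $\tfrac{a(1-a^{n-1})}{1-a}(\ln(1-a)-1)+3\ln a$ appends as-is at the end of the exponent. The upper bound is produced by multiplying the Riemann--Xi upper bound with the integration upper bound, and analogously for the lower bound, which yields the two displayed inequalities of the theorem.

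The main obstacle is purely bookkeeping, not analysis: one has to verify that the $n^2\ln n$, $n\ln n$, $n^2$, $n$, $\ln n$, and constant coefficients aggregate exactly into the numbers stated, and in particular that the $-2n\ln n$ from the Riemann--Xi lower bound and the $-2(n-1)\ln n$ from the integration lower bound combine (absorbing a $2\ln n$ into the tail for $n\geq 22$) to give the claimed $-4n\ln n$ on the left, while the $\ln n$ term on the upper bound just carries through. The constants $2.8515$ and $-9.5903$ in the theorem then match the $\ln\prod_{s=2}^5 1/\xi(s)=1.85914\ldots$ already rolled into the Section \ref{sec:Riemann-Xi} lemma plus the $13.0284$ and $2.08647$ absorptions performed there. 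No new analytic inequality is needed beyond the two lemmas already available and the standing assumption $n\geq 22$ (which ensures $0<a<1$ and validates the truncations used to absorb lower-order terms).
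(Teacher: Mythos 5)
Your proposal is correct and follows essentially the same route as the paper, which offers no separate proof of this theorem but simply multiplies the lower (resp.\ upper) bounds of the Riemann--Xi lemma from Section \ref{sec:Riemann-Xi} and the final integration lemma from Section \ref{sec:Integration}, adding exponents. Your bookkeeping remarks are accurate: the lower bound's $-2n\ln n - 2(n-1)\ln n = -4n\ln n + 2\ln n$ is weakened to $-4n\ln n$ by discarding the positive $2\ln n$, and the constants $2.8515 = 1.8591\ldots + 0.9924$ and $-9.5903 = 1.8591\ldots - 11.4495$ arise exactly as you describe.
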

Because we are focusing on the practical case, we assume that $\eta$ and $\delta$ are sufficiently close to $\frac{1}{2}$ and $1$, respectively.

Let $\frac{1}{2}<\eta<\frac{3}{4\sqrt{2}}$, $\frac{3}{4}<\delta<1$ and  $n \geq 22$. Then $\frac{\sqrt{2}}{2} < t < \frac{\sqrt{3}}{2}$, and therefore for any integer $c_1 \leq 0$, we have
\begin{align*}
& \frac{\delta^{n-1}}{\eta^{n-1}}\cdot \exp \Bigg(n\bigg(-\frac{5}{4}\ln 2 - \frac{1}{4} \ln \pi - \frac{3}{2}\ln \eta + \frac{5}{4}\bigg) + 2.8515 + \ln \eta \\
& \quad + \frac{a(1 - a^{n-1})}{1-a}(\ln (1-a) - 1) + 3\ln a\Bigg) \\
& > \exp \Bigg(n\bigg(-\frac{5}{4}\ln 2 - \frac{1}{4} \ln \pi - \frac{3}{2}\ln \frac{3}{4\sqrt{2}} + \frac{5}{4}\bigg) + 2.8515 + \ln \frac{1}{2} \\
& \quad + \frac{(\frac{\sqrt{3}}{2})^{n} (1 - (\frac{\sqrt{2}}{2})^{n(n-1)})}{1-(\frac{\sqrt{3}}{2})^{n}}(\ln (1-(\frac{\sqrt{3}}{2})^{n}) - 1) + 3\ln (\frac{\sqrt{2}}{2})^{n}+(n-1)\ln\sqrt{2}\Bigg) \\
& > c_{1} n \ln n.
\end{align*}
Also, for any integer $c_2 \geq 1$, we have
\begin{align*}
& \frac{\eta^{n-1}}{\delta^{n-1}}\cdot \exp \Bigg(n\left(-\frac{5}{4}\ln 2 - \frac{1}{4}\ln \pi -\frac{3}{2}\ln \eta + \frac{7}{4}\right) + 4\ln n - 9.5903 + \ln \eta \Bigg) \\
& <  \exp \Bigg(n\left(-\frac{5}{4}\ln 2 - \frac{1}{4}\ln \pi -\frac{3}{2}\ln \frac{1}{2} + \frac{7}{4}\right) + 4\ln n - 9.5903 + \ln \frac{3}{4\sqrt{2}} +(n-1) \ln \frac{\sqrt{2}}{2} \Bigg) \\
& < c_{2} n \ln n.
\end{align*}
Combining these inequalities, we have the following
\begin{theorem}
	Let $\frac{1}{2} < \eta < \frac{3}{4\sqrt{2}}, \frac{3}{4} < \delta < 1$, $n \geq 22$, $t = \cos\left(\sin^{-1}\left(\frac{\eta}{\delta}\right)\right)$, and $a = t^n$. Then we have
\begin{align*}
&  \exp \Bigg(-\frac{1}{4}n^2\ln n + n^2\bigg(\frac{3}{4}\ln 2 + \frac{1}{4}\ln \pi + \frac{1}{2} \ln \eta + \frac{3}{8} - \frac{1}{6} \ln a\bigg) - 4n\ln n \Bigg) \\
& < 2^{\frac{n^2-3n+4}{2}} \eta^{\frac{(n-1)(n-2)}{2}} \prod_{i=2}^{n} \frac{1}{\xi(i)}\cdot \prod_{i=1}^{n-1} \int_{-\eta}^{\eta} \sqrt{\delta^2 - x^2}^{-i(n-i)} dx \\
& < \exp \Bigg( -\frac{1}{4}n^2\ln n + n^2\left(\frac{3}{4}\ln 2 + \frac{1}{4}\ln \pi + \frac{1}{2}\ln \eta + \frac{3}{8} -\frac{1}{6}\ln a \right) -\frac{1}{2}n\ln n \Bigg).
\end{align*}
\end{theorem}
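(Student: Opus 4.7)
The plan is to start from the explicit bounds in the previous theorem and absorb all subleading contributions using the additional parameter restrictions. First I would translate the hypotheses into bounds on the auxiliary quantities: since $\eta/\delta \in (\tfrac{1}{2}, \tfrac{1}{\sqrt{2}})$ under $\eta \in (\tfrac{1}{2}, \tfrac{3}{4\sqrt{2}})$ and $\delta \in (\tfrac{3}{4}, 1)$, we get $t = \sqrt{1 - (\eta/\delta)^2} \in (\tfrac{\sqrt{2}}{2}, \tfrac{\sqrt{3}}{2})$ and $\delta/\eta \geq \sqrt{2}$. In particular $a = t^n$ decays exponentially in $n$, the factor $(\delta/\eta)^{n-1}$ appearing in the lower bound is at least $(\sqrt{2})^{n-1}$, and the factor $(\eta/\delta)^{n-1}$ appearing in the upper bound is at most $(1/\sqrt{2})^{n-1}$.

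For the lower bound I would isolate the part of the exponent in the previous theorem that does not appear in the simplified statement: the $n$-linear term, the constant, the integration piece $\frac{a(1-a^{n-1})}{1-a}(\ln(1-a)-1) + 3\ln a$, and the log of the $(\delta/\eta)^{n-1}$ prefactor. Into each term I substitute the extremal parameter value that produces a uniform lower bound: $-\tfrac{3}{2}\ln\eta > -\tfrac{3}{2}\ln\tfrac{3}{4\sqrt{2}}$, $\ln\eta > \ln\tfrac{1}{2}$, $t \leq \tfrac{\sqrt{3}}{2}$ in the $\ln(1-a)-1$ block, $t \geq \tfrac{\sqrt{2}}{2}$ in $3\ln a = 3n\ln t$, and $(n-1)\ln(\delta/\eta) \geq (n-1)\ln\sqrt{2}$. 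The resulting explicit expression is at most linear in $n$, so it exceeds $c_1 n\ln n$ for any $c_1 \leq 0$ once $n \geq 22$; taking $c_1 = 0$ lets the whole subleading block be dropped without weakening the lower bound.

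For the upper bound I would carry out the analogous but reversed computation, using $-\tfrac{3}{2}\ln\eta \leq \tfrac{3}{2}\ln 2$, $\ln\eta \leq \ln\tfrac{3}{4\sqrt{2}}$, and $(n-1)\ln(\eta/\delta) \leq -\tfrac{n-1}{2}\ln 2$, to upper bound the corresponding remainder by an expression that is linear in $n$ plus a $4\ln n$ term. Since such a quantity is dominated by $n\ln n$ for $n \geq 22$, it is at most $c_2 n\ln n$ for some $c_2 \geq 1$; absorbing it into the coefficient of $n\ln n$ converts the $-\tfrac{3}{2} n\ln n$ of the previous theorem into the $-\tfrac{1}{2} n\ln n$ of the simplified statement. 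The main obstacle will be tracking the direction of each inequality through the integration remainder $\frac{a(1-a^{n-1})}{1-a}(\ln(1-a)-1) + 3\ln a$: since $\ln(1-a) - 1 < 0$ and $\ln a < 0$ while $a > 0$, its two sub-blocks pull the sign of the whole in opposite directions, so the extremal value of $t$ to be substituted differs between them, and getting those choices right is where the only real care is required.
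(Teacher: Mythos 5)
Your proposal follows essentially the same route as the paper: starting from the preceding theorem, you substitute the extremal values $\eta \in (\tfrac{1}{2}, \tfrac{3}{4\sqrt{2}})$, $\delta \in (\tfrac{3}{4},1)$, $t \in (\tfrac{\sqrt{2}}{2}, \tfrac{\sqrt{3}}{2})$ term by term into the subleading block, show the lower-bound residual exceeds $\exp(c_1 n \ln n)$ with $c_1 = 0$ so it can be dropped, and absorb the upper-bound residual into $c_2 n\ln n$ with $c_2 = 1$, turning $-\tfrac{3}{2}n\ln n$ into $-\tfrac{1}{2}n\ln n$ --- exactly the paper's two displayed estimates before the theorem. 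The only point to make explicit (which the paper also leaves implicit) is that the linear-in-$n$ expression in the lower residual is actually nonnegative for $n \geq 22$, not merely ``at most linear in $n$''; this does hold since the coefficient of $n$ works out to be positive.
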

$a = t^n$, or equivalently, $\ln a = n\ln t$, implies that $\exp\left(-\frac{1}{6}n^2 \ln a\right) = a^{-\frac{1}{6}n^2} = t^{-\frac{1}{6}n^3}$ is the leading term of the approximation of 
$$
2^{\frac{n^2-3n+4}{2}} \eta^{\frac{(n-1)(n-2)}{2}} \prod_{i=2}^{n} \frac{1}{\xi(i)}\cdot \prod_{i=1}^{n-1} \int_{-\eta}^{\eta} \sqrt{\delta^2 - x^2}^{-i(n-i)} dx.
$$
Therefore we suggest two approximations as follows:
\begin{corollary}
	(A rough version) Let $\frac{1}{2} < \eta < \frac{3}{4\sqrt{2}}, \frac{3}{4} < \delta < 1$, and $n \geq 22$. Then we have a rough approximation 
	$$
	\cos^{-\frac{1}{6}n^3}\left(\sin^{-1}\left(\frac{\eta}{\delta}\right)\right)
	$$
	of the average number of the $(\delta, \eta)$-LLL bases in dimension $n$. Immediately, the following asymptotic behavior
	$$
	\lim_{n\rightarrow \infty}\frac{\ln\left(\cos^{-\frac{1}{6}n^3}\left(\sin^{-1}\left(\frac{\eta}{\delta}\right)\right)\right)}{\ln\left(2 (2\eta)^{\frac{(n-1)(n-2)}{2}} \prod_{i=2}^{n} \frac{S_i (1)}{\zeta(i)}\cdot \frac{1}{n} \prod_{i=1}^{n-1} \frac{1}{i(n-i)}\cdot \prod_{i=1}^{n-1} \int_{-\eta}^{\eta} \sqrt{\delta^2 - x^2}^{-i(n-i)} dx\right)} = 1
	$$
	holds.
\end{corollary}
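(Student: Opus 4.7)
The plan is to derive the limit directly from the two-sided estimate in the preceding theorem by taking logarithms and isolating the dominant $n^3$-order term. Throughout, write $N(n)$ for the average number of $(\delta,\eta)$-LLL bases, and set $L = -\ln t > 0$; under the corollary's hypotheses one has $t \in (\sqrt{2}/2,\sqrt{3}/2)$, so $L$ is a fixed strictly positive constant and $\ln a = n\ln t = -nL$.

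The key steps proceed as follows. First I would rewrite the term $-\tfrac{1}{6}n^2 \ln a$ appearing inside the exponentials of both bounds of the preceding theorem as $\tfrac{1}{6}n^3 L$, and then observe that every remaining summand is $O(n^2\ln n)$: the explicit $-\tfrac{1}{4}n^2\ln n$, the $n^2$-bracket with $\tfrac{3}{4}\ln 2 + \tfrac{1}{4}\ln \pi + \tfrac{1}{2}\ln \eta + \tfrac{3}{8}$, and the $n\ln n$, $n$, and $\ln n$ corrections. Taking $\ln$ of the two-sided estimate of the preceding theorem then yields
\begin{equation*}
\ln N(n) \;=\; \tfrac{1}{6}n^3 L \;+\; O(n^2\ln n).
\end{equation*}
Since the logarithm of the proposed approximation is exactly $-\tfrac{1}{6}n^3\ln t = \tfrac{1}{6}n^3 L$, the asymptotic ratio equals
\begin{equation*}
\frac{\tfrac{1}{6}n^3 L}{\tfrac{1}{6}n^3 L + O(n^2\ln n)} \;=\; \frac{1}{1 + O((\ln n)/n)} \;\longrightarrow\; 1,
\end{equation*}
which is precisely the claim. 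The initial assertion that $\cos^{-n^3/6}(\sin^{-1}(\eta/\delta))$ is a rough approximation is then just the interpretation of this limit as saying the two quantities agree to leading exponential order.

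There is no genuine analytic obstacle remaining: the heavy lifting already resides in the bounds established in Sections~\ref{sec:Riemann-Xi} and \ref{sec:Integration}, and this corollary is essentially a cosmetic repackaging asserting that the cubic-in-$n$ contribution dominates. The one point that deserves explicit care is confirming that $L$ is bounded away from $0$, which follows because the stated ranges of $\eta$ and $\delta$ force $t \le \sqrt{3}/2 < 1$, hence $L \ge \ln(2/\sqrt{3}) > 0$; this guarantees that $\tfrac{1}{6}n^3 L$ genuinely dominates the $O(n^2\ln n)$ remainder rather than being swamped by it, and without this uniformity the ratio argument would collapse.
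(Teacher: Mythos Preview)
Your argument is correct and matches the paper's own reasoning: the paper does not prove this corollary separately but simply observes, just before stating it, that $\exp(-\tfrac{1}{6}n^2\ln a)=t^{-n^3/6}$ is the leading term of the two-sided bound in the preceding theorem, which is exactly the content you spell out. Your added remark that the hypotheses force $t\le\sqrt{3}/2<1$, so that $L=-\ln t$ is bounded away from zero and the cubic term genuinely dominates the $O(n^2\ln n)$ remainder, makes explicit a point the paper leaves implicit.
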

\begin{corollary}
	(A tight version) Let $\frac{1}{2} < \eta < \frac{3}{4\sqrt{2}}, \frac{3}{4} < \delta < 1$, and $n \geq 22$. Then we have a tight approximation 
	$$
	\cos^{-\frac{1}{6}n^3}\left(\sin^{-1}\left(\frac{\eta}{\delta}\right)\right) \cdot \exp\left( -\frac{1}{4}n^2\ln n + n^2\bigg(\frac{3}{4}\ln 2 + \frac{1}{4}\ln \pi + \frac{1}{2} \ln \eta + \frac{3}{8} \bigg) - cn\ln n  \right)
	$$
	of the average number of the $(\delta, \eta)$-LLL bases in dimension $n$. Note that $c$ is between $\frac{1}{2}$ and $4$.
\end{corollary}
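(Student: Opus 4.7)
The plan is to derive the claim directly from the two-sided bound given in the preceding theorem, by rewriting the factor involving $a$ as a power of a cosine and then observing that both bounds already have the advertised shape, differing only in the coefficient of the $n\ln n$ correction.

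First I would substitute $a = t^{n}$ (equivalently $\ln a = n\ln t$) with $t=\cos(\sin^{-1}(\eta/\delta))$ into the exponent of each bound. Under this substitution, the term $-\tfrac{1}{6}n^{2}\ln a$ becomes $-\tfrac{1}{6}n^{3}\ln t$, so that
\[
\exp\!\left(-\tfrac{1}{6}n^{2}\ln a\right)=t^{-\frac{1}{6}n^{3}}=\cos^{-\frac{1}{6}n^{3}}\!\left(\sin^{-1}\!\left(\tfrac{\eta}{\delta}\right)\right).
\]
This identifies the leading exponential factor common to the proposed approximation and to both bounds.

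Next I would read off the residual exponents. After extracting the cosine factor, the preceding theorem gives
\[
\cos^{-\frac{1}{6}n^{3}}\!\!\left(\sin^{-1}\!\tfrac{\eta}{\delta}\right)\exp\!\left(-\tfrac{1}{4}n^{2}\ln n+n^{2}\!\left(\tfrac{3}{4}\ln 2+\tfrac{1}{4}\ln\pi+\tfrac{1}{2}\ln\eta+\tfrac{3}{8}\right)-c^{\ast}n\ln n\right)
\]
with $c^{\ast}=4$ in the lower bound and $c^{\ast}=\tfrac{1}{2}$ in the upper bound (the sub-leading pieces $+2.8515+\ln\eta$ etc.\ from the original theorem are absorbed as $o(n\ln n)$ and were already discarded when passing from the simplified bound to the cleaner statement in the previous theorem, so no new estimate is needed here). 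Since the map $c\mapsto \exp(-cn\ln n)$ is monotone decreasing, any value $c\in[\tfrac{1}{2},4]$ produces a function that lies between the upper and lower bounds for all $n\ge 22$, which is precisely the content of the corollary.

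The only non-routine step is verifying that the constant-in-$n$ terms $+2.8515+\ln\eta$ on the lower side and $+4\ln n-9.5903+\ln\eta$ on the upper side, together with the multiplicative prefactors $\delta^{n-1}/\eta^{n-1}$ and $\eta^{n-1}/\delta^{n-1}$, can indeed be absorbed into the slack $c\in[\tfrac{1}{2},4]$ for the chosen parameter ranges $\tfrac{1}{2}<\eta<\tfrac{3}{4\sqrt{2}}$ and $\tfrac{3}{4}<\delta<1$. I would handle this by the same estimate used in the previous theorem to pass to the "integer $c_{1}\le 0$" and "integer $c_{2}\ge 1$" bounds: bound $\ln(\delta/\eta)$ and $\ln(\eta/\delta)$ by explicit constants on the given parameter ranges, note that each of the discarded pieces is $O(n)$, and observe that any $O(n)$ contribution is dominated by $\tfrac{1}{2}n\ln n$ for $n\ge 22$. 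This is the main (though still very mild) obstacle, because one has to check that even the \emph{tightest} choice $c=\tfrac{1}{2}$ is not spoiled by the accumulated lower-order terms; the rest of the argument is an algebraic rewriting.
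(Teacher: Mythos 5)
Your proposal is correct and follows essentially the same route as the paper: the corollary is obtained directly from the preceding two-sided bound by rewriting $\exp(-\tfrac{1}{6}n^2\ln a)=t^{-n^3/6}=\cos^{-n^3/6}(\sin^{-1}(\eta/\delta))$ and observing that the lower and upper bounds differ only in the coefficient $c^{*}\in\{4,\tfrac12\}$ of the $-n\ln n$ term, so monotonicity in $c$ places the proposed approximant between them. Your worry in the final paragraph about re-absorbing the $O(n)$ and constant terms is unnecessary, as you yourself half-note: that absorption (via the $c_1\le 0$ and $c_2\ge 1$ estimates and the prefactors $\delta^{n-1}/\eta^{n-1}$) was already carried out in passing to the cleaner theorem, so the corollary really is just the algebraic reading-off you describe.
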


\section*{Acknowledgements}
This work was supported by the research grant of Jeju National University in 2022.


%
%


\begin{thebibliography}{}
	%
	%
\bibitem{Ajtai1996} M. Ajtai (1996, July). Generating hard instances of lattice problems. In Proceedings of the twenty-eighth annual ACM symposium on Theory of computing (pp. 99-108).
\bibitem{Batir2008} N. Batir (2008). Inequalities for the gamma function. Archiv der Mathematik, 91(6), 554-563.
\bibitem{Hoffstein2008} J. Hoffstein, J. Pipher, J.H. Silverman (2008). An introduction to mathematical cryptography (Vol. 1). New York: springer.
\bibitem{Khalid2019} A. Khalid, S. McCarthy, M. O’Neill, \& W. Liu (2019, June). Lattice-based cryptography for IoT in a quantum world: Are we ready?. In 2019 IEEE 8th International Workshop on Advances in Sensors and Interfaces (IWASI) (pp. 194-199). IEEE.
\bibitem{Kim2015} S. Kim (2015). On the shape of a high-dimensional random lattice. Stanford University.
\bibitem{Kocabas2016} $\ddot{\text{O}}$. Kocaba\c{s}, \& T. Soyata (2016). Medical data analytics in the cloud using homomorphic encryption. In E-Health and Telemedicine: Concepts, Methodologies, Tools, and Applications (pp. 751-768). IGI Global.
\bibitem{Nejatollahi2017} H. Nejatollahi, N. Dutt, S. Ray, F. Regazzoni, I. Banerjee, \& R. Cammarota (2017). Software and hardware implementation of lattice-cased cryptography schemes. Center for Embedded Cyber-Physical Systems, 1-43.

\end{thebibliography}
\end{document}